\newcommand{\e}{\varepsilon}
\newcommand{\bx}{\boldsymbol{x}}
\newcommand{\bv}{\boldsymbol{v}}
\newcommand{\blue}{\textcolor{black}}
\newcommand{\red}{\textcolor{black}}
\newtheorem{lemma}{Lemma}
\newtheorem{theorem}{Theorem}
\begin{document}
\title{A Bi-fidelity based asymptotic-preserving neural network for the semiconductor Boltzmann equation and its inverse problem}

\author[label1]{Liu Liu}
\author[label2]{Xueyu Zhu}
\author[label3]{Zhenyi Zhu}

\affiliation[label1]{organization={Department of Mathematics, The Chinese University of Hong Kong},
            city={Hong Kong},
            country={China}}

\affiliation[label2]{organization={Department of Mathematics, The University of Iowa},
            city={Iowa City},
            postcode={100084},
            country={USA}}
            
\affiliation[label3]{organization={Department of Mathematics, The Chinese University of Hong Kong},
            city={Hong Kong},
            country={China}}

\begin{abstract}
 This paper introduces a Bi-fidelity Asymptotic-Preserving Neural Network (BI-APNNs) framework, designed to efficiently solve forward and inverse problems for the semiconductor Boltzmann equation. Our approach builds upon the Asymptotic-Preserving Neural Network (APNNs) methodology \cite{APNN-transport}, which employs a micro-macro decomposition to handle the model's multiscale nature. We specifically address a key bottleneck in the original APNNs: the slow convergence of the macroscopic density $\rho$ in the near fluid-dynamic regime, i.e., for small Knudsen numbers $\varepsilon$. The core innovation of BI-APNNs is a novel bi-fidelity decomposition of the macroscopic quantity $\rho$, which accurately approximates the true density at small $\varepsilon$, and can be efficiently pre-trained. A separate and more compact neural network is then tasked with learning only the minor correction term, $\rho_{\text{corr}}$. This strategy not only significantly {\it accelerates} the training convergence but also improves the accuracy of the forward problem solution, particularly in the challenging fluid-dynamic limit. Meanwhile, we demonstrate through extensive numerical experiments that our new BI-APNNs yields substantially more accurate and robust results for inverse problems compared to the standard APNNs. Validated on both the semiconductor Boltzmann and the Boltzmann-Poisson systems, our work shows that the bi-fidelity formulation is a powerful enhancement for tackling multiscale kinetic equations, especially when dealing with inverse problems constrained by partial observation data.\end{abstract}

\begin{keyword} 
Boltzmann Equation \sep APNNs \sep Bi-fidelity Methods \sep Inverse Problem.
\end{keyword}

\maketitle

\section{Introduction} Kinetic equations form the basis for modeling a wide range of physical systems where non-equilibrium dynamics is dominant, spanning from rarefied gas flows and plasma physics to the behavior of electrons in semiconductor devices \cite{Semi-Book}. As a bridge between the microscopic world of individual particles and the macroscopic world of continuum mechanics, these equations, particularly the Boltzmann-type models, are indispensable \cite{A15}. These equations are essential for design, optimization, control, and inverse problems, including semiconductor device design, gas flow channel topology optimization, and risk management in quantitative finance \cite{CS21}. Numerous applications necessitate the identification of unknown or optimal parameters in Boltzmann-type equations or mean-field models \cite{AHP15,Caflisch, CFP13, Cheng11, Lai}.

Solving such kinetic models by using traditional numerical methods is challenging. The challenges are various, including the high dimensionality of the phase space and design of the method to capture dynamics across disparate spatial and temporal scales. 
It is more difficult to solve real-world problems and their parameter estimation, especially when there is only sparse data available. Data-driven approaches, including machine learning methods have gained significant popularity in recent decades \cite{EW21}. Among these, Physics-Informed Neural Networks (PINNs) have emerged as a powerful framework. By embedding the governing partial differential equations (PDEs) directly into the neural network's loss function, PINNs can effectively approximate solutions even with minimal data, and have been successfully applied to a variety of forward and inverse problems \cite{chen2020physics, lou2021physics,  mao2020physics, zhu2025physicssolver}.

Nevertheless, the intrinsic multiscale character of kinetic equations presents a subtle but significant hurdle for standard PINNs frameworks. The system's behavior is often dictated by a small, dimensionless parameter---the Knudsen number, $\varepsilon$---which can lead to severe numerical stiffness. A naive application of PINNs often fails to preserve the correct physical behavior in the asymptotic limit as $\varepsilon \to 0$, resulting in a loss of accuracy and robustness precisely in the regimes of most physical interest \cite{Jin-Ma-Wu1}. To address this challenge, the principles of Asymptotic-Preserving (AP) schemes, a well-established tool in classical numerical analysis \cite{AP-Review}, were integrated into the neural network framework, leading to the Asymptotic-Preserving Neural Network (APNN) \cite{Guilia-Zhu, APNN-transport, Jin-Ma-Wu1,liu2025asymptotic}. By employing a micro-macro decomposition, APNNs enforce the correct asymptotic structure at the level of the loss function, ensuring accuracy and efficiency across different physical regimes.

Despite its advantages, the APNNs methodology reveals a practical limitation: in the crucial fluid-dynamic regime where $\varepsilon$ is very small, the neural network's convergence for the macroscopic density, $\rho$, can be prohibitively slow. This paper introduces the so-called \textbf{Bi-fidelity Asymptotic-Preserving Neural Networks (BI-APNNs)} to address these challenges. Our core idea is to re-formulate the learning task by decomposing the density into a dominant, low-fidelity component and a minor, high-fidelity correction: $\rho = \rho_{\text{diff}} + \rho_{\text{corr}}$. 
The $\rho_{\text{diff}}$ term, which represents the dominant component of the density profile in the asymptotic limit, can be efficiently pre-trained or computed. A much smaller and more agile network is then dedicated to learning only the correction term, $\rho_{\text{corr}}$. This bi-fidelity strategy yields a twofold benefit: it substantially accelerates the training process and simultaneously improves the final accuracy of the forward problem solution. Crucially, we find that this enhanced fidelity provides a decisive advantage in solving inverse problems, leading to more reliable and accurate parameter estimations.

This paper focuses specifically on the Boltzmann semiconductor equation, a fundamental model in modern electronics \cite{Ansgar}. We aim to address two fundamental challenges: first, the efficient and accurate simulation of the system's evolution, known as the forward problem; and second, the inverse problem of estimating unknown physical parameters, such as the collision cross-section, given limited observational data of the particle density.


The main contribution of this work is to design a new and novel bi-fidelity based APNNs approach to significantly accelerate the training process in previously developed APNNs, and apply it to solve the semiconductor Boltzmann(-Poisson) equation and its parameter inference using limited data. The rest of the paper is organized as follows. Section \ref{sec:model} introduces the semiconductor Boltzmann model and the micro-macro decomposition. In Section \ref{sec:NN-general}, we present the formulation of our proposed BI-APNN framework, with convergence analysis provided in Section \ref{sec:analysis}. Extensive numerical experiments are shown in Section \ref{sec:Num} to validate the robustness and efficiency of our method in solving both forward problem and parameters inference, followed by conclusion in the final section.

\section{The models and Micro-macro decomposition method}
\label{sec:model}

\noindent\textbf{The semiconductor-Boltzmann.}
\blue{We consider the semiconductor Boltzmann equation \cite{JP2000} given by}
\begin{equation}
\label{Boltz-eqn}
\e \partial_t f + v \cdot \nabla_{x} f +  \nabla_{x} \phi\, \cdot \nabla_{v} f = \frac{1}{\e}\mathcal{Q}(f), 
\end{equation}
where $f(t,x,v)$ is the probability density distribution for particles located at $x \in \mathcal{D}\subset \mathbb{R}^{d_x}$ with velocity $v\in \mathbb{R}^{d_v}$. In our model, $\e$ is the Knudsen number defined as the ratio of the mean free path and the typical length scale. In our application, $\e$ varies from $O(1)$, the kinetic regime, to $\e\ll 1$, the diffusion regime.  the  $\phi(t,x)$ is external electric potential. 

The anisotropic collision operator $\mathcal{Q}$ describes a linear approximation of the electron-phonon interaction, given by 
$$  \mathcal{Q}(f)(t,x,v) = \int_{\mathbb{R}^{d_v}} \sigma(v,w) \left( M(v)f(t,x,w) - M(w)f(t,x,v) \right) dw, $$
with $M$ the normalized Maxwellian 
$ M(v) = \frac{1}{\pi^{d/2}} e^{-|v|^2}$. 
Here \blue{$\sigma(v,w)$} denotes the scattering coefficient for the electron-phonon collisions. The collision frequency is defined as
\blue{$$ \lambda(v) = \int_{\mathbb{R}^{d_v}} \sigma(v,w)M(w) dw. $$}
We refer the readers to \cite{Rode,Ansgar,Sze81} for more physical background.

Define $n_x$ the unit outward normal vector on the spatial boundary 
$\partial\mathcal{D}$. Let $\gamma = \partial\mathcal{D}\times\Omega$, then the phase boundary can be split into an outgoing boundary $\gamma_{+}$, incoming boundary $\gamma_{-}$ and a singular boundary $\gamma_0$, which are defined by
\begin{equation}
\label{gamma}
\begin{aligned}
& \gamma_{+}:= \left\{ (x,v)\in \partial\mathcal{D}\times\Omega: 
v \cdot n_x >0 \right\}, \\[6pt]
& \gamma_{-}:= \left\{ (x,v)\in \partial\mathcal{D}\times\Omega: v\cdot n_x <0 \right\}, \\[6pt]
& \gamma_0 := \left\{ (x,v)\in \partial\mathcal{D}\times\Omega: v\cdot n_x =0\right\}. 
\end{aligned}
\end{equation}
The inflow boundary condition is given by
\begin{equation} f(t,x,v) = f_{\text{BC}}(t,x,v), \qquad \text{for   }\, 
(t,x,v) \in [0,T]\times \gamma_{-}. \end{equation}
We assume the initial condition that
$$ f(t=0,x,v)=f_{\text{IC}}(x,v). $$

\noindent\textbf{The Boltzmann-Poisson system.}
In this work, we also study the {\it nonlinear} Boltzmann-Poisson system, where the electric potential $\phi(t,x)$ is solved by the Poisson equation as below: 
\begin{equation}
\label{eqn:BP}
\left\{
\begin{array}{ll}
\displaystyle\e \partial_t f + v \cdot \nabla_{x} f + 
\nabla_{x} \phi\, \cdot \nabla_{v} f  = \frac{1}{\e}\mathcal{Q}(f), \\[6pt]
\displaystyle\beta \Delta_x\phi = \int_{\mathbb{R}^{d_v}}f dv - c(x), \\[6pt]
\displaystyle\phi(t,0) = 0, \qquad \phi(t,1) = V, 
\end{array}
\right.
\end{equation}
with $\beta$ the scaled Debye length, $V$ the applied bias voltage and $c(x)$ the doping profile.

\subsection{The micro-macro decomposition method}

We employ the micro-macro decomposition technique \cite{MM08} and derive the micro-macro system for the semiconductor Boltzmann equation \eqref{Boltz-eqn}. 
Assume the ansatz 
\begin{equation}\label{Ans} f = \Pi f + \e g, 
\end{equation}
where $g: = g(t, x, v)$ and the notation $\Pi f :=  \langle f \rangle\, M(v)$ is defined by  
\begin{equation*}\label{bracket}
    \langle f \rangle := \int_{\mathbb{R}} f(t,x,v)\,dv = \rho(t, x).
\end{equation*}

Inserting the ansatz \eqref{Ans} into  \eqref{Boltz-eqn}, one gets
\blue{
\begin{equation}\label{PP}
\e \partial_t (\Pi f) + \e^2 \partial_t g + v \cdot \nabla_x (\Pi f) + \e \nabla_x \cdot (v g)  - 2 v \rho \cdot \nabla_x \phi \, M(v) + \e \nabla_x\phi\ \cdot \nabla_v g  = \mathcal{Q}(g), 
\end{equation}
where $\nabla_v (\Pi f) = \rho\,\nabla_v M(v) = - 2 \rho\, v M(v)$ is used. }

Take the projection operator $\Pi$ on both sides of \eqref{PP}, then 
\blue{
$$ \e \partial_t (\Pi f) + \e^2 \partial_t (\Pi g) + \nabla_x \rho \left(\int v M(v) dv\right) \cdot M(v) + \e \Pi \nabla_x \cdot (v g) + \e \nabla_x\phi\, \cdot \Pi(\nabla_v g) = 0, $$}
which is equivalent to
\blue{\begin{equation}\label{Macro}
\partial_t (\Pi f) + \Pi \nabla_x \cdot (v g) + \nabla_x \phi \, \cdot \Pi(\nabla_v g) = 0, 
\end{equation}}
where $\Pi g = 0$ and $\int_{\mathbb R} v M(v) dv = 0$ is used. 
Now subtract \eqref{PP} by \eqref{Macro}, then
\blue{$$
\e^2 \partial_t g  + v \cdot \nabla_x (\Pi f)  + \e (\mathbb{I} - \Pi) \nabla_x \cdot (v g) + \e \nabla_x \phi\, \cdot (\mathbb{I} - \Pi) \nabla_v g - 2 v \rho\, \cdot \nabla_x \phi \, M(v) = \mathcal{Q}(g), 
$$}
that is
\blue{\begin{equation}\label{Micro}
\e^2 \partial_t g +  v \cdot \nabla_x \rho\, M(v)  +  \e (\mathbb{I} - \Pi) \left( v \cdot \nabla_x g + \nabla_x\phi \cdot \nabla_v g \right) - 2 v \rho \cdot \nabla_x \phi \, M(v)  = \mathcal{Q}(g).  
\end{equation}}

As $\e\to 0$, equation \eqref{Macro} stays unchanged, and the microscopic equation \eqref{Micro} gives 
\blue{$$ g = \mathcal{Q}^{-1}\left( v \cdot \nabla_x \rho\, M(v) - 2 v \rho \cdot \nabla_x\phi\, M(v) \right) = \left( \nabla_x\rho - 2 \rho \nabla_x\phi \right) \mathcal{Q}^{-1}(v M(v)). $$} 
Plug $g$ into \eqref{Macro} and integrate over $v$, we derive the drift-diffusion limit \cite{PP91}: 
\blue{\begin{equation}\label{Diffusion} \partial_t \rho = \nabla_x \cdot \left( T \nabla_x\rho - 2 (\rho\, \nabla_x\phi) \right) , \end{equation}}
where the diﬀusion matrix $T$ is defined by 
$T=\int_{\mathbb{R}^{d_v}}\frac{v \bigotimes v M(v)}{\lambda(v)} dv$. 

To summarize, the coupled equations \eqref{Macro}--\eqref{Micro} for $\rho$ and $g$ solve the following system in our micro-macro decomposition framework: 
\blue{
\begin{equation}
\label{MM}
\left\{
\begin{array}{ll}
\displaystyle\partial_t \rho + \nabla_x \cdot \langle v g \rangle + \nabla_x\phi \cdot \langle \partial_v g \rangle = 0, \\[6pt]
\e^2 \partial_t g +  v \cdot \nabla_x \rho\, M(v)  +  \e (\mathbb{I} - \Pi) \left( v \cdot \nabla_x g + \nabla_x\phi \cdot \nabla_v g \right) - 2 v \rho \cdot \nabla_x \phi \, M(v)  = \mathcal{Q}(g), \\[6pt]
\Pi g = 0. 
\end{array} 
\right.
\end{equation}
}
where the third equation is for the mass conservation for $g$, which is crucial to the training of neural networks \cite{APNN-transport} and will be mentioned in section \ref{sec:APNN}. 

As $\e\to 0$, according to \eqref{MM}, the limiting system for $\rho$ and $g$ satisfies
\blue{
\begin{equation}
\label{limit-diff}
\left\{
\begin{array}{ll}
\displaystyle\partial_t \rho + \nabla_x \cdot \langle v g \rangle + \nabla_x\phi \cdot \langle \nabla_v g \rangle = 0, \\[6pt]
\displaystyle v \cdot \nabla_x \rho \, M(v) - 2 v \rho \cdot \nabla_x\phi \, M(v)  = \mathcal{Q}(g) , \\[6pt]
\displaystyle \Pi g = 0, 
\end{array} 
\right.
\end{equation}
}
which is {\it equivalent} to the diffusion limit  given in \eqref{Diffusion}.

\section{Methodology}
\label{sec:NN-general}
This section details the numerical methodology developed for efficiently solving the Boltzmann semiconductor system introduced in Section~\ref{sec:model}. As previously discussed, this multiscale kinetic equation presents significant computational challenges for standard numerical and machine learning methods. To address these challenges, we propose a novel bi-fidelity framework built upon recent advances in physics-informed neural networks.

To logically construct our approach, this section is organized as follows. We begin by reviewing the foundational concepts of Physics-Informed Neural Networks (PINNs). We then discuss their extension, the Asymptotic-Preserving Neural Networks (APNNs), which are specifically designed for multiscale problems. Finally, we introduce our primary contribution: the Bi-fidelity Asymptotic-Preserving Neural Networks (BI-APNNs), and detail their formulation.

\subsection{Physics Informed Neural Networks}
\label{sec:NN}
The fundamental idea of Physics-Informed Neural Networks (PINNs) \cite{raissi2019physics} is to approximate the solution of a partial differential equation (PDE) using a neural network. The network is trained by minimizing a loss function that directly incorporates the underlying physics. This loss function is constructed by summing the mean squared errors of several components: (1) the PDE residual evaluated on a set of collocation points inside the domain, (2) the mismatch between the network's prediction and the prescribed initial conditions, and (3) the mismatch at the boundary conditions. To find the optimal values for the network parameters $\theta$ that are composed of all the weights $w_{ji}$ and bias $b_j$, the neural network is trained by minimizing the following loss function 
\begin{equation}
\label{Loss-PINN}
\begin{aligned}
\mathcal{R}_{\mathrm{PINN}}^{\e} = Loss_{\text{GE}} + \lambda_1 Loss_{\text{BC}}
 + \lambda_2 Loss_{\text{IC}}, 
 \end{aligned}
\end{equation}
where the loss for the residual of the governing PDE is given by: 
\begin{equation}
Loss_{\text{GE}}:=\int_{\mathcal{T}} \int_{\mathcal{D}} \int_{\Omega} \Big| \e \partial_t f_\theta^{\mathrm{NN}}
 +   v \partial_x f_\theta^{\mathrm{NN}} +  \partial_x \phi\, \partial_v f_\theta^{\mathrm{NN}} - \frac{1}{\e}\mathcal{Q}(f_\theta^{\mathrm{NN}})|^2 \, d\bv d\bx dt. 
\end{equation}
The loss for the boundary condition is
\begin{equation} Loss_{\text{BC}}:= \int_{\mathcal{T}}\int_{\gamma_{-}}
\left|\mathcal{B}  \left(\rho_\theta^{\mathrm{NN}} M(v) + \e g_\theta^{\mathrm{NN}}\right) - f_{\text{BC}}\right|^2 \, ds dt,  \end{equation}
and the loss for the initial condition: 
\begin{equation} Loss_{\text{IC}}:=\int_{\mathcal{D}} \int_{\Omega}\left|\mathcal{I}\left(\rho_\theta^{\mathrm{NN}} M(v) +\e g_\theta^{\mathrm{NN}}\right)- f_{\text{IC}} \right|^2
 \, d\bv d\bx, 
\end{equation}
where the initial and boundary conditions are incorporated into the loss function as a regularization or penalty term, 
with penalty parameters $\lambda_1, \lambda_2$ chosen for optimal performance. The most popular method to minimize the loss function over the parameter space are stochastic gradient descent and advanced optimizers such as Adam \cite{Adam14}. After the training process and achieving the optimal set of parameter values $\theta^{\star}$ by minimizing the PINNs loss
\eqref{Loss-PINN}, i.e., 
$$ \theta^{\star} = \text{argmin} \ \mathcal{R}_{\mathrm{PINNs}}^{\e}(\theta), $$
the neural network surrogate $f^{NN}_{\theta^{\star}}(t,x,v)$ can be evaluated at any given point in the temporal and phase space to obtain the solution of \eqref{Boltz-eqn}.

\subsection{The APNNs framework}
\label{sec:APNN}

A key challenge arises from the multiscale nature of the model, characterized by the Knudsen number $\e$. Standard PINNs formulations often fail to respect the asymptotic properties of the kinetic equation, preventing them from capturing the correct macroscopic behavior uniformly across all ranges of $\e$. This can lead to significant inaccuracies, particularly in the fluid-dynamic limit. To overcome this limitation, Asymptotic-Preserving Neural Networks (APNNs) were introduced by Jin et al. \cite{Jin-Ma-Wu2}. The core strategy of the APNNs framework is to design a loss function that explicitly incorporates the model's asymptotic behavior. By doing so, the network can accurately capture the macroscopic dynamics even when the scaling parameter $\e$ is small, thereby possessing the crucial Asymptotic-Preserving (AP) property. The fundamental concept of the APNNs is illustrated in Figure~\ref{fig:APNN}.

\begin{figure}[htbp]
\centering
\includegraphics[width=0.65\textwidth]{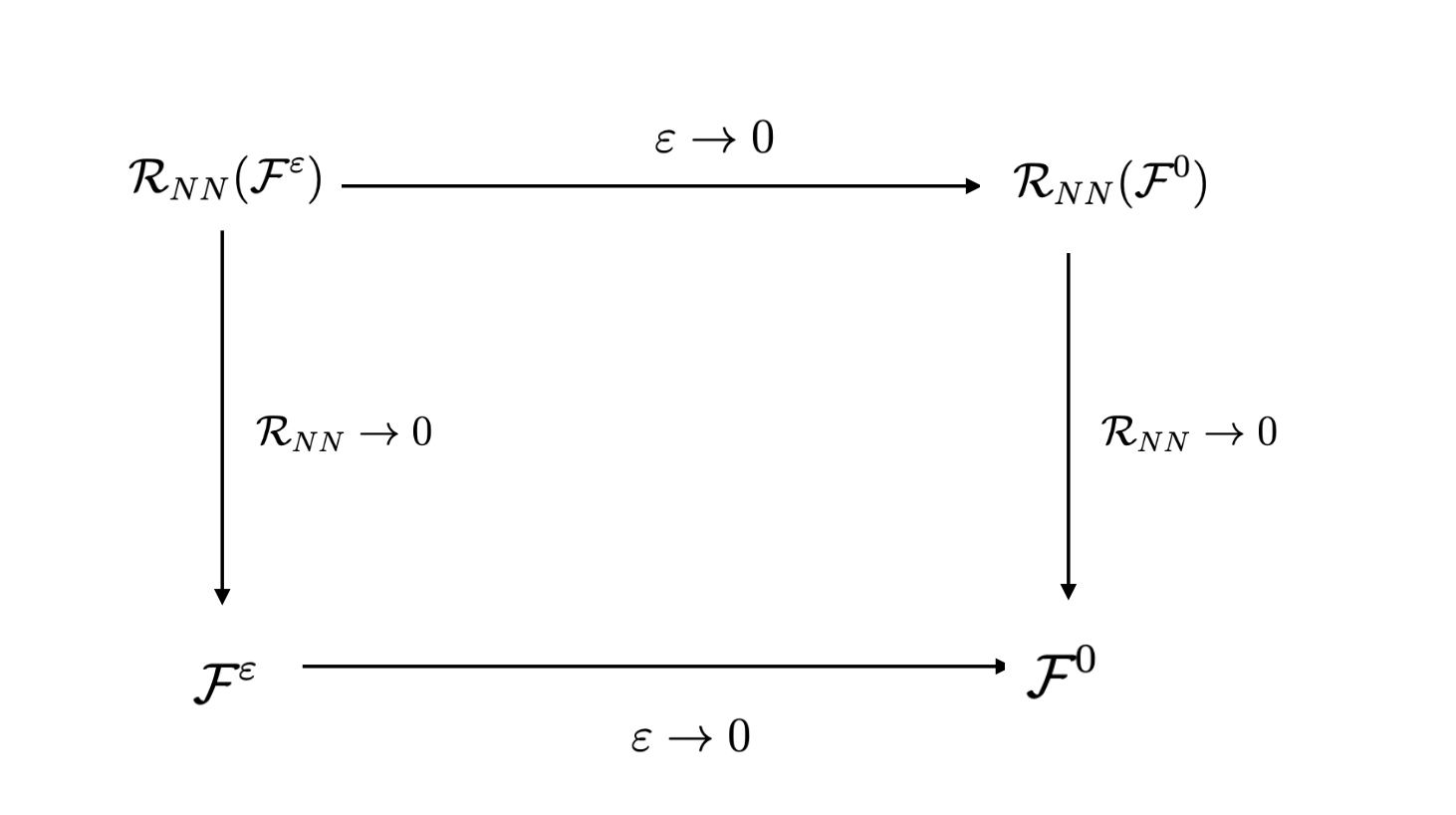}
\caption{Illustration of APNNs. }
\label{fig:APNN}
\end{figure}

Let $\mathcal{F}^{\e}$ be a multiscale model that depends on the scaling parameter $\e$. As $\e\to 0$, it converges to a reduced order limit 
$\mathcal{F}^0$. The solution of $\mathcal{F}^{\e}$ is approximated by the neural network through the imposition of residual term $\mathcal{R}_{NN}(\mathcal{F}^{\e})$, 
whose asymptotic limit is denoted by $\mathcal{R}_{NN}(\mathcal{F}^0)$ as $\e\to 0$. The neural network is called AP if $\mathcal{R}_{NN}(\mathcal{F}^0)$ is consistent
with the residual for the system $\mathcal{F}^0$. 

In APNNs, \cite{Jin-Ma-Wu1,liu2025asymptotic} employ two deep neural networks to parameterize the two functions $\rho(t,\bx)$ and $g(t, \bx, \bv)$, where the notations here are extended to the multi-dimensional spatial and velocity space. Since the density $\rho$ is non-negative, we put an exponential function at the last output layer of the DNN for $\rho$ and denote
\begin{equation}
\rho_\theta^{\mathrm{NN}}(t, \bx):=\exp \left(-\tilde{\rho}_\theta^{\mathrm{NN}}(t, \bx)\right) \approx \rho(t, \bx)
\end{equation}

\blue{We remark that the velocity discretization for $g(t,\bx,\bv)$, where the Hermite quadrature rule is used since $v\in \mathbb{R}^{d_v}$.} This is standard and the same as traditional AP method studied in \cite{JP2000}. Let $g(t, \bx, \bv) = \psi(t, \bx, \bv)M(\bv)$, with $M(\bv) = \frac{1}{\pi^{d_v/2}} e^{-|\bv|^2}$ and 
\begin{equation}\label{Psi} \psi(t,\bx,\bv) = \sum_{k=0}^{N} \psi_k(t,\bx) H_k(\bv)
\end{equation}
is the Hermite expansion with $N$ being the order. 
We then compute the collision operator $Q$ in \eqref{Boltz-eqn} as follows
\blue{$$ Q(g)(\bv) = M(\bv) \sum_{j=0}^{N_{\bv}} \sigma(\bv, v_j)\, \psi(v_j)\, w_j - \lambda(\bv) g(\bv), $$} 
with \blue{$\lambda(\bv) = \sum_{j=0}^{N_{\bv}} \sigma(\bv, v_j)\, w_j$}.
The derivative in $\bv$ for $\psi$ is given by
\begin{equation}
\begin{aligned}
\partial_{\bv} \psi 
 = \sum_{j=0}^{N_v} \psi(v_j)\, C_j(\bv),
\end{aligned}
\end{equation}
where $C_j(\bv) = \sum_{k=0}^{N} \sqrt{2k}\, H_k(v_j) H_{k-1}(\bv) w_j$. 

In order to obtain approximation for $g$, we let $\tilde{\psi}_\theta^{\mathrm{NN}}(t,\bx,\bv)$ be the output of a fully-connected neural network with input $t$, $\bx$ and $\bv$, then impose a post-processing step to guarantee the mass conservation in \eqref{MM}. Define 
\begin{equation}\label{g_NN}
 g_\theta^{\mathrm{NN}}(t, \bx, \bv) =
 \psi_{\theta}^{\mathrm{NN}}(t,\bx,\bv)M(\bv) := \tilde{\psi}_\theta^{\mathrm{NN}}(t,\bx,\bv)M(\bv)-\Pi\left(\tilde{\psi}_\theta^{\mathrm{NN}}(t,\bx,\bv)M(\bv)\right), 
\end{equation}
as an approximation for $g$, now 
$\Pi g_\theta^{\mathrm{NN}} = 0 $ is automatically satisfied. Moreover, $\nabla_{\bv}g_\theta^{\mathrm{NN}}$ is computed by 
\begin{equation}
    \nabla_{\bv}g_\theta^{\mathrm{NN}} = \sum_{j=0}^{N_{\bv}}\psi_\theta^{\mathrm{NN}}(t,\bx,v_j) C_j(\bv) M(\bv)-2\bv M(\bv)\psi_\theta^{\mathrm{NN}}(t,\bx,\bv) + 2\bv M(\bv) \langle \psi_\theta^{\mathrm{NN}}(t,\bx,\bv) M(\bv) \rangle.
\end{equation}
where $\nabla_{\bv}\psi_{\theta}^{NN} = \sum_{j=0}^{N_{\bv}} \psi_\theta^{\mathrm{NN}}(t,\bx,v_j)C_j(\bv)$.  

\textbf{APNNs Loss.} For the APNNs method, the physics-informed loss function is constructed from the residuals of the coupled macro-micro system \eqref{Macro}--\eqref{Micro}:
\begin{equation}
\label{Loss-APNN}
\begin{aligned}
\mathcal{R}_{\mathrm{APNNs}}^{\e} = & \frac{1}{|\mathcal{T} \times \mathcal{D}|} \int_{\mathcal{T}} \int_{\mathcal{D}}\left|\partial_t \rho_\theta^{\mathrm{NN}}+\nabla_{\bx} \cdot\left\langle\bv g_\theta^{\mathrm{NN}}\right\rangle + \red{ \nabla_{\bx}\phi \cdot  \left\langle \nabla_{\bv} g_\theta^{\mathrm{NN}}\right\rangle } \right|^2 \, d\bx d t \\[6pt]
& + \frac{1}{|\mathcal{T} \times \mathcal{D} \times \Omega|} \int_{\mathcal{T}} \int_{\mathcal{D}} \int_{\Omega} \Big| \e^2 \partial_t g_\theta^{\mathrm{NN}}
 + \e(I-\Pi) \left(\bv \cdot \nabla_{\bx} g_\theta^{\mathrm{NN}} + \red{\nabla_{\bx}\phi \cdot \nabla_{\bv} g_\theta^{\mathrm{NN}} }\right) \\[6pt]
&   - 2\bv \cdot \nabla_{\bx}\phi\, \rho_\theta^{\mathrm{NN}}\, M(\bv)  
 + \bv \cdot \nabla_{\bx} \rho_\theta^{\mathrm{NN}} M(\bv)  - \mathcal{Q}( g_\theta^{\mathrm{NN}} ) \Big|^2 \,  d\bv d\bx d t  \\[6pt]
& + \frac{\lambda_1}{| \mathcal{T} \times \partial \mathcal{D} \times \Omega |} \int_{\mathcal{T}}\int_{\gamma_{-}}\left|\mathcal{B}  \left(\rho_\theta^{\mathrm{NN}} M(\bv) + \e g_\theta^{\mathrm{NN}}\right) - f_{\text{BC}}\right|^2 \, ds dt\\[6pt]
 &+\frac{\lambda_2}{|\mathcal{D} \times \Omega|} \int_{\mathcal{D}} \int_{\Omega}\left|\mathcal{I}\left(\rho_\theta^{\mathrm{NN}} M(\bv) +\e g_\theta^{\mathrm{NN}}\right)- f_{\text{IC}} \right|^2
 \, d\bv d\bx.
\end{aligned}
\end{equation}

As $\e\to 0$, the above first two terms arisen from the model equation lead to 
\begin{equation*}
\begin{split}
& \mathcal{R}_{\mathrm{APNNs}} \text {, residual}  =  \frac{1}{|\mathcal{T} \times \mathcal{D}|} \int_{\mathcal{T}} \int_{\mathcal{D}}\left|\partial_t \rho_\theta^{\mathrm{NN}}+\nabla_x \cdot\left\langle\bv g_\theta^{\mathrm{NN}}\right\rangle + \red{ \nabla_{\bx}\phi \cdot  \left\langle \nabla_{\bv} g_\theta^{\mathrm{NN}}\right\rangle } \right|^2 \, d \bx d t \\[6pt]
& +\frac{1}{|\mathcal{T} \times \mathcal{D} \times \Omega|} \int_{\mathcal{T}} \int_{\mathcal{D}} \int_{\Omega}\left|   - 2\bv \cdot \nabla_{\bx}\phi\, \rho_\theta^{\mathrm{NN}}\, M(v) 
+ \bv \cdot \nabla_{\bx} \rho_\theta^{\mathrm{NN}}\, M(v)  -  \mathcal{Q}(g_\theta^{\mathrm{NN}})\right|^2 \, d \bv d \bx d t, 
\end{split}
\end{equation*}
which is identical to the loss function associated with the limiting system \eqref{limit-diff}. Consequently, the proposed loss function possesses the Asymptotic-Preserving (AP) property, a key feature that standard PINN formulations lack.

\subsection{Bi-fidelity method and Bi-APNNs Method} 
The bi-fidelity method is a computational approach that leverages solutions from both low-fidelity and high-fidelity models to balance accuracy and computational efficiency. By combining a coarse, computationally inexpensive model with a corrective term derived from a more accurate but resource-intensive model, it aims to approximate high-fidelity solutions at a reduced cost. We consider two ways of decomposition for \(\rho\) based on the bi-fidelity idea, defined as follows: 
\begin{equation}
\begin{aligned}
\label{EI-system}
   \text{(i) Explicit formulation: }\quad \rho_{\text{bi}}(t, \mathbf{x}) & = \rho_{\textit{diff}}(t, \mathbf{x}) + \varepsilon \rho_{\textit{corr}}(t, \mathbf{x}), \\[4pt]
   \text{(ii) Implicit formulation: }\quad \rho_{\text{bi}}(t, \mathbf{x}) & = \rho_{\textit{diff}}(t, \mathbf{x}) + \rho_{\textit{corr}}(t, \mathbf{x}),
\end{aligned}
\end{equation}
where the solution  $\rho_{\textit{diff}}$ from \eqref{Diffusion} is combined with a correction term.  The explicit formulation is motivated by the asymptotic property of the semiconductor Boltzmann equation. When the Knudsen number~$\varepsilon$ tends to zero, the solution gradually approaches the diffusion limit, and the microscopic correction becomes negligible. By expressing the macroscopic variable as~$\rho = \rho_{\textit{diff}} + \varepsilon \rho_{\textit{corr}}$, this scaling explicitly reflects the vanishing influence of the kinetic component in the diffusive regime, which means $\rho$ naturally converges to $\rho_{\textit{diff}}$ as $\e \to 0$.

{\bf Forward solution approximation}. In the Bi-APNNs, based on \eqref{EI-system}, we design the following two formulations: 
\begin{itemize}
\item \textbf{Explicit formulation (E-bi-APNNs):}
\begin{equation} \label{bi-e}
    \rho_{\text{bi},\theta}^{\mathrm{NN}}(t, \mathbf{x}) := \rho_{\textit{diff}}^{\mathrm{NN}}(t, \mathbf{x}) + \varepsilon \rho_{\textit{corr}}^{\mathrm{NN}}(t, \mathbf{x}),
\end{equation}
\item \textbf{Implicit formulation (I-bi-APNNs):}
\begin{equation} \label{bi-i}
    \rho_{\text{bi},\theta}^{\mathrm{NN}}(t, \mathbf{x}) := \rho_{\textit{diff}}^{\mathrm{NN}}(t, \mathbf{x}) + \rho_{\textit{corr}}^{\mathrm{NN}}(t, \mathbf{x}).
\end{equation}
\end{itemize}

\textbf{BI-APNNs Loss. } Similar to the APNNs, the physics-informed loss is the residual of the micro-macro system \eqref{MM}: 
\begin{equation}
\label{Loss-APNN}
\begin{aligned}
\mathcal{R}_{\text{BI-APNNs}}^{\e} = & \frac{1}{|\mathcal{T} \times \mathcal{D}|} \int_{\mathcal{T}} \int_{\mathcal{D}}\left|\partial_t \rho_{\textit{bi},\theta}^{\mathrm{NN}}+\nabla_{\bx} \cdot\left\langle\bv g_\theta^{\mathrm{NN}}\right\rangle + \red{ \nabla_{\bx}\phi \cdot  \left\langle \nabla_{\bv} g_\theta^{\mathrm{NN}}\right\rangle } \right|^2 \, d\bx d t \\[6pt]
& + \frac{1}{|\mathcal{T} \times \mathcal{D} \times \Omega|} \int_{\mathcal{T}} \int_{\mathcal{D}} \int_{\Omega} \Big| \e^2 \partial_t g_\theta^{\mathrm{NN}}
 + \e(I-\Pi) \left(\bv \cdot \nabla_{\bx} g_\theta^{\mathrm{NN}} + \red{\nabla_{\bx}\phi \cdot \nabla_{\bv} g_\theta^{\mathrm{NN}} }\right) \\[6pt]
&   - 2\bv \cdot \nabla_{\bx}\phi\, \rho_{\textit{bi},\theta}^{\mathrm{NN}}\, M(\bv)  
 + \bv \cdot \nabla_{\bx} \rho_{\textit{bi},\theta}^{\mathrm{NN}} M(\bv)  - \mathcal{Q}( g_\theta^{\mathrm{NN}} ) \Big|^2 \,  d\bv d\bx d t  \\[6pt]
& + \frac{\lambda_1}{| \mathcal{T} \times \partial \mathcal{D} \times \Omega |} \int_{\mathcal{T}}\int_{\gamma_{-}}\left|\mathcal{B}  \left(\rho_{\textit{bi},\theta}^{\mathrm{NN}} M(\bv) + \e g_\theta^{\mathrm{NN}}\right) - f_{\text{BC}}\right|^2 \, ds dt\\[6pt]
 &+\frac{\lambda_2}{|\mathcal{D} \times \Omega|} \int_{\mathcal{D}} \int_{\Omega}\left|\mathcal{I}\left(\rho_{\textit{bi},\theta}^{\mathrm{NN}} M(\bv) +\e g_\theta^{\mathrm{NN}}\right)- f_{\text{IC}} \right|^2
 \, d\bv d\bx.
\end{aligned}
\end{equation}
Note that the difference from the standard APNNs is that here we have two parts, $\rho^{\text{NN}}_{\textit{diff}}$ and $\rho^{\text{NN}}_{\textit{corr}}$, both approximated from neural networks. In the forward problem, we first pre-train a neural network to approximate the solution of the diffusion equation \eqref{Diffusion} and obtain $\rho^{\text{NN}}_{\textit{diff}}$. In this step the loss function is defined as
\begin{equation}
\label{Loss-APNN-Diff}
\begin{aligned}
\mathcal{L}_{\text{diffusion}} &=  
 \frac{1}{|\mathcal{T} \times \mathcal{D}|} \int_{\mathcal{T}} \int_{\mathcal{D}}\left|\partial_t \rho_{\textit{diff},\theta}^{\mathrm{NN}} - \nabla_x \cdot \left( T \nabla_x\rho_{\textit{diff},\theta}^{\mathrm{NN}} - 2 (\rho_{\textit{diff},\theta}^{\mathrm{NN}}\, \nabla_x\phi) \right) \right|^2 \, d\bx d t \\[4pt]
& + \frac{\lambda_1^{\text{diff}}}{| \mathcal{T} \times \partial \mathcal{D}  |} \int_{\mathcal{T}}\int_{\gamma_{-}}\left|\mathcal{B}  \left(\rho_{\textit{diff},\theta}^{\mathrm{NN}}M(\bv) \right) - f_{\text{BC}}\right|^2 \, ds dt\\[4pt]
 &+\frac{\lambda_2^{\text{diff}}}{|\mathcal{D} |} \int_{\mathcal{D}} \int_{\Omega}\left|\mathcal{I}\left(\rho_{\textit{diff},\theta}^{\mathrm{NN}}M(\bv)  \right)- f_{\text{IC}} \right|^2
 \, d\bv d\bx. 
\end{aligned}
\end{equation}
This pretrained model captures the macroscopic behavior of the system when $\varepsilon$ is small. In the subsequent training, $\rho^{\text{NN}}_{\textit{diff}}$ is fixed and treated as a known function, and the neural network only learns the correction term $\rho^{\text{NN}}_{\textit{corr}}$. Incorporating the governing equation of the diffusion limit may accelerate the convergence of the training procedure and potentially improve the accuracy of approximating the solution. Since the macroscopic dynamics are already encoded in a pretrained network, the training process of our designed BI-APNNs is expected to be more efficient. 

The BI-APNNs framework is applied to both the semiconductor Boltzmann equation \eqref{Boltz-eqn} and the Boltzmann-Poisson system \eqref{eqn:BP}. A key distinction in the implementation of these two models lies in the treatment of the electric potential, $\phi(t,x)$. For the standard semiconductor Boltzmann equation, the potential is a known function. In contrast, for the Boltzmann-Poisson system, the potential is itself an unknown that depends on the particle density. To address this, we introduce a separate neural network, $\phi_\theta(t,x)$, dedicated to approximating this self-consistent potential. It is important to note that this architectural approach---employing an auxiliary network for $\phi$ when necessary---is consistently used for both the forward and inverse problems. We summarize the framework of our method for the forward problem in Figure \ref{fig:bi-APNN}.  
 
\begin{figure}[H]
\centering
\includegraphics[width=1\textwidth]{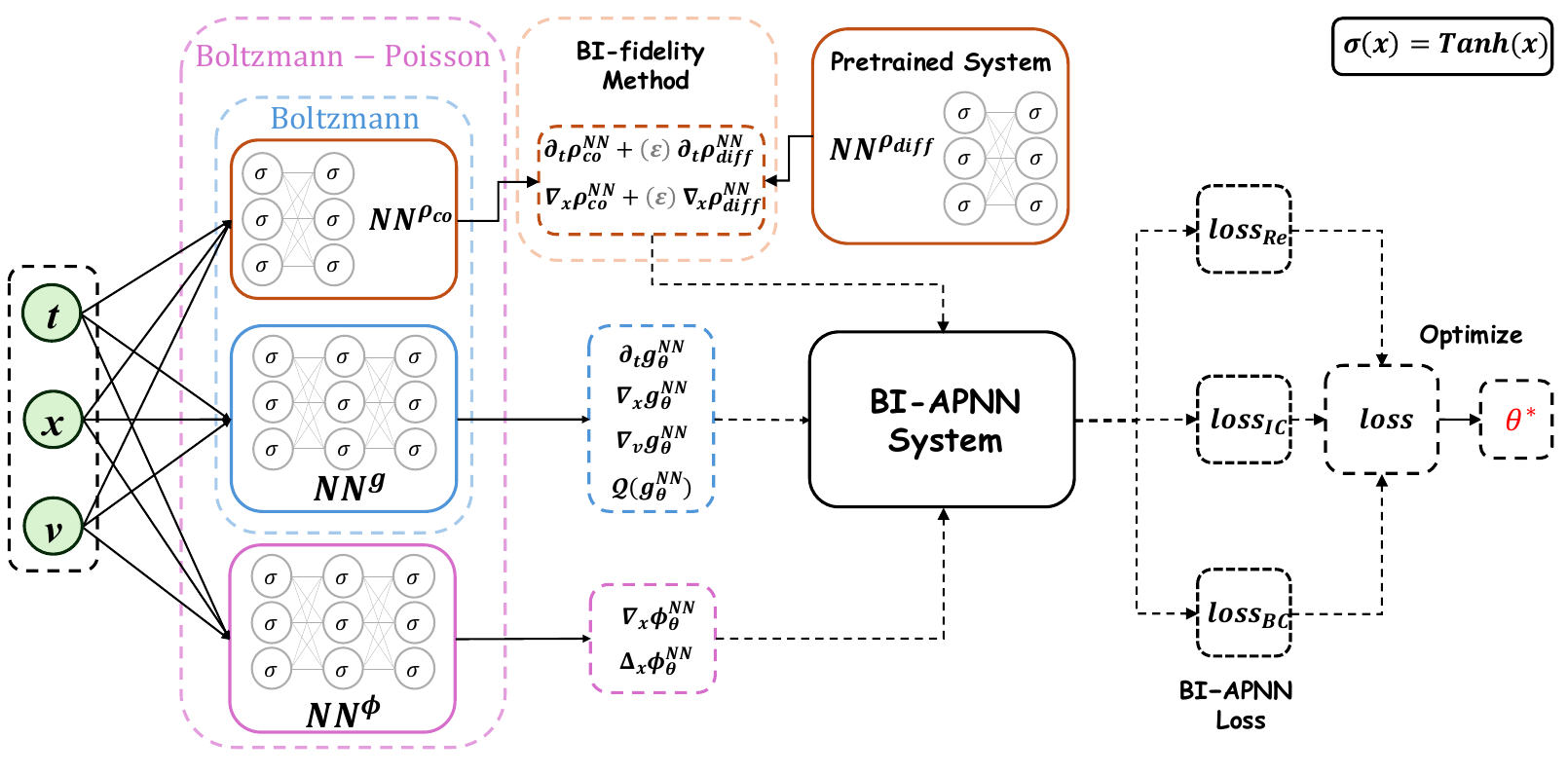}
\caption{Framework of BI-APNNs for the forward problem. Here $\sigma(x)$ denotes the activation function. } 
\label{fig:bi-APNN}
\end{figure}

For the inverse problem, we propose the physics-informed loss $\mathcal{L}_{\text{total}}$, which is composed of three components:
\begin{enumerate}
\item $\mathcal{L}_{\text{BI-APNNs}}$: This loss is the same as in \eqref{Loss-APNN}. 
\item $\mathcal{L}_{\text{diffusion}}$: This loss is the same formulation as \eqref{Loss-APNN-Diff}. However, different from the forward problem, where a pre-trained diffusion model can be used, in the inverse problem setting the key physical parameter (such as $\sigma$) is unknown. Therefore, the diffusion network should be {\it jointly} trained with the Bi-APNN loss, serving as the one regularizer in the total loss.

\item $\mathcal{L}_{\text{data}}$: A data-misfit loss that penalizes discrepancies with observed data.

\begin{equation}
\label{Loss-APNN}
\begin{aligned}
\mathcal{L}_{\text{data}} = 
 \frac{w_d^{\rho}}{N_{d_1}} \sum_{i=1}^{N_{d_1}} \left| \rho_{\textit{bi},\theta}(t_{d}^i, x_{d}^i) -\rho(t_{d}^i,x_{d}^i) \right|^2.
\end{aligned}
\end{equation}

\end{enumerate}
Finally, the total loss is defined by \begin{equation}\mathcal{L}_{\text{total}} = \mathcal{L}_{\text{BI-APNNs}}  + \mathcal{L}_{\text{diffusion}} + \mathcal{L}_{\text{data}}. \end{equation}
Figure \ref{fig:bi-APNN-inverse} shows our method's framework for inverse problem, applied to the semi-conductor Boltzmann model \eqref{Boltz-eqn} and Boltzmann-Poisson system \eqref{eqn:BP}. Further details for the data-matching term are provided in the numerical experiments section.
\begin{figure}[H]
\centering
\includegraphics[width=1\textwidth]{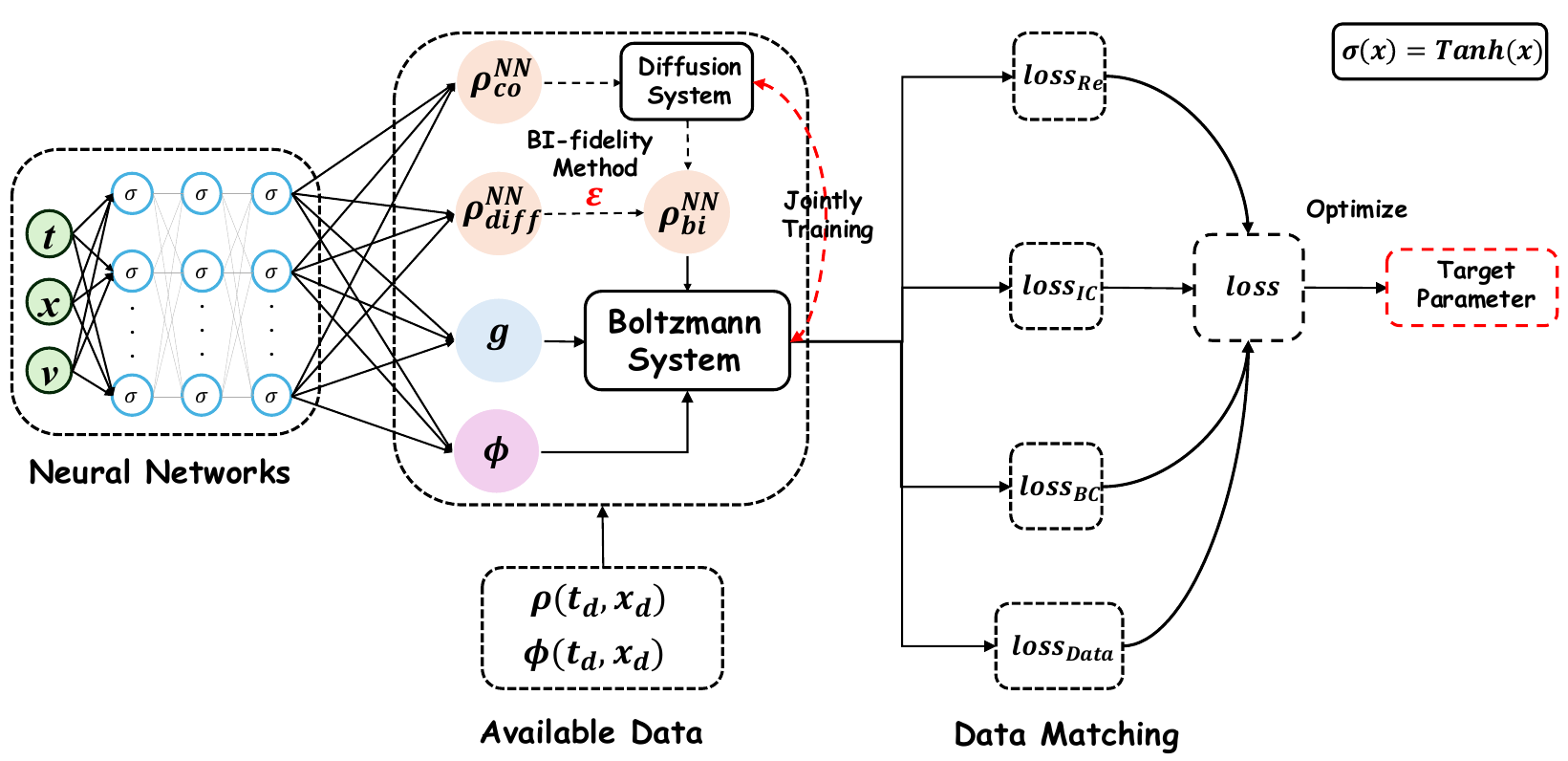}
\caption{Framework of BI-APNNs for the inverse problem.}
\label{fig:bi-APNN-inverse}
\end{figure}

\section{Convergence of the Bi-APNNs solution} \label{sec:analysis}

\subsection{Preliminaries}

We first review an important theorem on the existence of the approximated neural network solution, namely the Universal Approximation Theorem (UAT) \cite{UAT}. We adapt it to our model \eqref{Boltz-eqn} and use the notations introduced in subsection \ref{sec:NN}. 

\begin{lemma}
\label{Lemma1}
Suppose the solution to \eqref{Boltz-eqn} satisfies $f \in C^1([0,T]) \cap C^1(\mathcal{D}) \cap C^1(\Omega)$. Let the activation function $\bar\sigma$ be any non-polynomial function in $C^1(\mathbb R)$, then for any $\delta>0$, there exists a two-layer neural network
$$ f^{NN}(t,x,v) = \sum_{i=1}^{m_1} w_{1i}^{(2)} \bar\sigma\left( \left(w_{i 1}^{(1)}, w_{i 2}^{(1)}, w_{i 3}^{(1)}\right) \cdot(t, x, v)+b_{i}^{(1)}\right) + b_{1}^{(2)}, 
$$
such that 
\begin{equation}
\label{UAT}
\begin{aligned}
\displaystyle &  \left\| f - f^{NN} \right\|_{L^{\infty}(K)} < \delta, \qquad 
\left\| \partial_t ( f - f^{NN}) \right\|_{L^{\infty}(K)} < \delta, \\[4pt]
& \left\| \nabla_x ( f - f^{NN}) \right\|_{L^{\infty}(K)} < \delta, \quad
\left\| \nabla_v ( f - f^{NN}) \right\|_{L^{\infty}(K)} < \delta, 
\end{aligned}
\end{equation}
where the domain $K$ denotes $[0,T]\times\mathcal{D}\times\Omega$. 
\end{lemma}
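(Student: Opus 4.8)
The plan is to deduce Lemma~\ref{Lemma1} from the classical $C^1$ form of the Universal Approximation Theorem \cite{UAT}: for a compact set $K\subset\mathbb R^n$ and an activation $\bar\sigma\in C^1(\mathbb R)$ that is not a polynomial, the linear span of the single–hidden–layer ridge functions $\{\,x\mapsto\bar\sigma(w\cdot x+b):w\in\mathbb R^n,\ b\in\mathbb R\,\}$ (together with constants) is dense in $C^1(K)$, that is, one can match a target function and all of its first–order partial derivatives uniformly on $K$. In our setting $n=1+d_x+d_v$, the input is $(t,\bx,\bv)$, and $K=[0,T]\times\mathcal D\times\Omega$ is compact; the hypothesis is read as $f\in C^1(K)$, i.e.\ $f$ together with $\partial_t f$, $\nabla_{\bx} f$ and $\nabla_{\bv} f$ is continuous on $K$. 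Applying the theorem with the norm $\|u\|_{C^1(K)}:=\|u\|_{L^\infty(K)}+\|\partial_t u\|_{L^\infty(K)}+\|\nabla_{\bx} u\|_{L^\infty(K)}+\|\nabla_{\bv} u\|_{L^\infty(K)}$ and picking a two–layer network within $\delta$ of $f$ in this norm yields all four estimates in \eqref{UAT} simultaneously, which is exactly the claim.

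\textbf{Key steps.} To keep the argument self-contained I would record the two standard ingredients behind the $C^1$ density statement. \emph{Step 1 (reduction to a smooth activation).} Fix a mollifier $\varphi\in C_c^\infty(\mathbb R)$ with $\int\varphi=1$ and set $\bar\sigma_\eta:=\bar\sigma*\varphi_\eta$, $\varphi_\eta(\cdot)=\eta^{-1}\varphi(\cdot/\eta)$. Then $\bar\sigma_\eta\in C^\infty(\mathbb R)$, $\bar\sigma_\eta\to\bar\sigma$ in $C^1$ uniformly on compacta, and for a suitable $\varphi$ the function $\bar\sigma_\eta$ is not a polynomial (if $\bar\sigma*\psi$ were a polynomial for every $\psi\in C_c^\infty$, then $\bar\sigma$ itself would be one). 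Moreover each ridge function $x\mapsto\bar\sigma_\eta(w\cdot x+b)=\int\bar\sigma(w\cdot x+b-y)\varphi_\eta(y)\,dy$ is a $C^1(K)$–limit of Riemann sums $\sum_l c_l\,\bar\sigma(w\cdot x+(b-y_l))$, which are genuine single–hidden–layer networks built from $\bar\sigma$; the convergence is in $C^1$ because $\bar\sigma\in C^1$ and $K$ is compact. Hence it suffices to prove $C^1$–density when the activation is $C^\infty$ and non-polynomial. \emph{Step 2 (generating polynomials).} For a $C^\infty$ non-polynomial $\bar\sigma$ there is a base point $b_0$ with $\bar\sigma^{(k)}(b_0)\neq0$ for all $k\ge0$ (Baire category). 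For $a\in\mathbb R^n$ put $h_\lambda(x):=\bar\sigma(\lambda\,(a\cdot x)+b_0)$; differentiating $k$ times in $\lambda$ at $\lambda=0$ gives $(a\cdot x)^k\,\bar\sigma^{(k)}(b_0)$, and each $\lambda$–derivative is a $C^1(K)$–limit of divided differences of the network functions $h_\lambda$ (remainders bounded uniformly on the compact $K$). Thus every ridge monomial $(a\cdot x)^k$, and by linear combinations over finitely many directions every multivariate polynomial, lies in the $C^1(K)$–closure of the network class; since polynomials are dense in $C^1(K)$ for compact $K$ (mollify $f$ to a real–analytic function and truncate its Taylor expansion, controlling $f$ and $\nabla f$ together), the lemma follows.

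\textbf{Main obstacle.} The only place this goes beyond the textbook $C^0$ version of the UAT is maintaining control of the derivatives throughout, i.e.\ ensuring that the two limiting operations above — the Riemann–sum approximation of the convolution in Step~1 and the divided–difference approximation of the $\lambda$–derivatives in Step~2 — converge not merely uniformly but in $C^1(K)$. This is precisely what forces the hypothesis $\bar\sigma\in C^1(\mathbb R)$: with it, $\partial_t$, $\nabla_{\bx}$ and $\nabla_{\bv}$ act on a ridge function by the chain rule as $w_j\,\bar\sigma'(w\cdot x+b)$ and inherit the same approximation estimates, so the bookkeeping closes uniformly on $K$. A secondary, routine point is the Baire–category existence of the base point $b_0$ in Step~2; it can be downgraded to the weaker per-degree statement that for each $k$ there is some $b_k$ with $\bar\sigma^{(k)}(b_k)\neq0$ (immediate from non-polynomiality), at the cost of slightly more notation in assembling the monomials.
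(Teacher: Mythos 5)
The paper offers no proof of Lemma~\ref{Lemma1}: it is stated as a direct adaptation of the $C^1$ universal approximation theorem of \cite{UAT} (density of two-layer networks with non-polynomial $C^1$ activation in $C^1(K)$ for compact $K$), which is exactly the result you invoke in your opening paragraph, so your approach coincides with the paper's. Your Steps 1--2 simply go further by sketching the standard Pinkus-style proof of that density theorem (mollification of the activation, then generation of ridge monomials via $\lambda$-derivatives at a point where no derivative of $\bar\sigma$ vanishes); this is correct and self-contained, but the paper itself does not carry out any of it.
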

We remark that the above result can be generalized to a neural network with several hidden layers \cite{Multilayer}.

\subsection{Convergence of the loss function}
Recall the APNNs framework introduced in section \ref{sec:APNN}, the loss function is based on the residual of the macro-micro system \eqref{Macro}--\eqref{Micro}, thus our neural network approximated solutions are $\rho$ and $g$, instead of $f$ if applying PINNs to the model \eqref{Boltz-eqn}. 
In \cite{liu2025asymptotic}, the authors show that a sequence of neural network solutions to 
\eqref{Macro}--\eqref{Micro} exists such that the total loss function converges to zero, if assuming the analytic solution 
\begin{equation}\label{condition}
\rho \in C^1([0,T])\cap C^1(\mathcal{D}), \qquad 
g \in C^1([0,T])\cap C^1(\mathcal{D}) \cap C^1(\Omega). 
\end{equation}

Below we show the convergence of our new APNN approach in the bi-fidelity framework, namely the Bi-APNN method.

\begin{theorem}[Convergence of BI-APNNs]
\label{Thm2}
Let the assumptions of Theorem 1 in \cite{liu2025asymptotic} hold: consider the semiconductor Boltzmann equation \eqref{Boltz-eqn}, where the potential $\phi(t,x)$ is given and satisfies
$\|\nabla_x\phi\|_{L^{\infty}([0,T]\times\mathcal{D})}$ bounded. Let $(\rho,g)$ be the analytic solution of the macro--micro system \eqref{Macro}--\eqref{Micro} which is sufficiently smooth. 

Consider the BI-APNNs' approximations of the macroscopic density by either the explicit formulation
\begin{equation}
\begin{aligned}
        \rho_{\mathrm{bi},\theta}^{\mathrm{NN}}(t,x) = \rho_{\mathrm{diff}}^{\mathrm{NN}}(t,x) + \varepsilon\,\rho_{\mathrm{corr}}^{\mathrm{NN}}(t,x),
\end{aligned}
\tag{E-bi}
\end{equation}
or the implicit formulation
\begin{equation}
\begin{aligned}
        \rho_{\mathrm{bi},\theta}^{\mathrm{NN}}(t,x) = \rho_{\mathrm{diff}}^{\mathrm{NN}}(t,x) + \rho_{\mathrm{corr}}^{\mathrm{NN}}(t,x).
\end{aligned}
\tag{I-bi}
\end{equation}

Let $\mathcal{R}_{\mathrm{BI\text{-}APNNs}}^{\varepsilon}(\rho_\mathrm{bi}, g)$ denote the BI-APNNs total loss function given in \eqref{Loss-APNN}, 
then there exists a sequence of neural network parameters $\{\theta_{[j]}\}_{j=1}^{\infty}$ (where $\theta_{[j]}$ encompasses parameters for both $\rho_{\mathrm{diff}}^{\mathrm{NN}}$ and $\rho_{\mathrm{corr}}^{\mathrm{NN}}$) and corresponding BI-APNNs approximations $\{(\rho_{\mathrm{bi},j}, g_j)\}_{j=1}^{\infty}$
(with $\rho_{\mathrm{bi},j}$ given either by (E-bi) or (I-bi)) such that
\begin{equation}
    \begin{aligned}
        \mathcal{R}_{\mathrm{Bi\text{-}APNNs}}^{\varepsilon}(\rho_{\mathrm{bi},j}, g_j) \;\to\; 0,
        \qquad\text{as } j\to\infty.
    \end{aligned}
\end{equation}
\end{theorem}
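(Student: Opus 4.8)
The plan is to reduce the convergence statement for the BI-APNNs loss to the already-established convergence result for the standard APNNs loss (Theorem 1 of \cite{liu2025asymptotic}), by exhibiting a suitable choice of $\rho_{\mathrm{diff}}^{\mathrm{NN}}$ and $\rho_{\mathrm{corr}}^{\mathrm{NN}}$ whose sum (or $\varepsilon$-weighted sum) reproduces an arbitrarily good neural-network approximation of the true density $\rho$. First I would invoke Lemma~\ref{Lemma1} (the Universal Approximation Theorem, in its multilayer form) applied to the smooth functions guaranteed by the hypotheses: since $(\rho,g)$ is sufficiently smooth in the sense of \eqref{condition}, for any $\delta>0$ there exist networks $\rho_\delta^{\mathrm{NN}}$ and $g_\delta^{\mathrm{NN}}$ (the latter built through the Hermite expansion \eqref{Psi} with the mass-conservation post-processing \eqref{g_NN}) such that $\rho_\delta^{\mathrm{NN}}$ and its $t$- and $x$-derivatives are within $\delta$ of $\rho$ in $L^\infty$, and similarly for $g$. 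This is exactly the construction used in \cite{liu2025asymptotic} to drive $\mathcal{R}_{\mathrm{APNNs}}^\varepsilon \to 0$.

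The key observation is that the BI-APNNs loss $\mathcal{R}_{\mathrm{BI\text{-}APNNs}}^\varepsilon(\rho_{\mathrm{bi}},g)$ is \emph{syntactically identical} to $\mathcal{R}_{\mathrm{APNNs}}^\varepsilon(\rho,g)$ once one substitutes $\rho \mapsto \rho_{\mathrm{bi},\theta}^{\mathrm{NN}}$: the density enters the four loss terms only through the combinations $\partial_t \rho_{\mathrm{bi}}$, $\nabla_{\bx}\rho_{\mathrm{bi}}$, and $\rho_{\mathrm{bi}}$ itself, so the loss is a continuous functional of $\rho_{\mathrm{bi}}$ (and its first derivatives) and of $g$. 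Hence it suffices to show that the admissible class of functions representable as $\rho_{\mathrm{bi},\theta}^{\mathrm{NN}}$ is dense in $C^1$. For the implicit formulation (I-bi) this is immediate: take $\rho_{\mathrm{diff}}^{\mathrm{NN}} \equiv 0$ (or any fixed smooth network) and let $\rho_{\mathrm{corr}}^{\mathrm{NN}}$ be the UAT approximant of $\rho - \rho_{\mathrm{diff}}^{\mathrm{NN}}$; then $\rho_{\mathrm{bi}} = \rho_{\mathrm{diff}}^{\mathrm{NN}} + \rho_{\mathrm{corr}}^{\mathrm{NN}}$ approximates $\rho$ to within $\delta$ in $C^1$. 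For the explicit formulation (E-bi), with $\varepsilon>0$ fixed, one instead picks $\rho_{\mathrm{diff}}^{\mathrm{NN}}$ to be a UAT approximant of $\rho$ itself (using Lemma~\ref{Lemma1} on the network architecture for $\rho_{\mathrm{diff}}$, with the same exponential output layer) and $\rho_{\mathrm{corr}}^{\mathrm{NN}} \equiv 0$; then $\rho_{\mathrm{bi}} = \rho_{\mathrm{diff}}^{\mathrm{NN}}$ and the same estimate holds. (Alternatively, for E-bi one can put the full burden on $\rho_{\mathrm{corr}}^{\mathrm{NN}}$, approximating $(\rho - \rho_{\mathrm{diff}}^{\mathrm{NN}})/\varepsilon$, which is smooth since $\varepsilon$ is a fixed positive constant in this theorem.)

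With such a choice, I would then repeat, essentially verbatim, the estimate from \cite{liu2025asymptotic}: plug $\rho_{\mathrm{bi},j} = \rho_{\mathrm{bi},\theta_{[j]}}^{\mathrm{NN}}$ (the approximant with $\delta = 1/j$) and $g_j = g_{\theta_{[j]}}^{\mathrm{NN}}$ into each of the four integrals in $\mathcal{R}_{\mathrm{BI\text{-}APNNs}}^\varepsilon$, and bound each integrand by a constant (depending on $\|\nabla_x\phi\|_{L^\infty}$, on $\varepsilon$, on the operator norms of $\Pi$ and $\mathcal{Q}$ on the finite-dimensional Hermite space, and on the $C^1$ norms of the exact $(\rho,g)$) times $\delta$; since the exact $(\rho,g)$ satisfy the macro--micro system \eqref{Macro}--\eqref{Micro} exactly, the residual of the NN surrogate is controlled by the approximation errors \eqref{UAT}. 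Integrating over the bounded domains $\mathcal{T}\times\mathcal{D}\times\Omega$ and using the normalizing volume factors gives $\mathcal{R}_{\mathrm{BI\text{-}APNNs}}^\varepsilon(\rho_{\mathrm{bi},j},g_j) \le C\,\delta^2 \to 0$ as $j\to\infty$. The main subtlety — and the only place the bi-fidelity structure genuinely intervenes — is to argue that the reparametrization of $\rho$ as $\rho_{\mathrm{diff}}^{\mathrm{NN}} + (\varepsilon)\rho_{\mathrm{corr}}^{\mathrm{NN}}$ does not shrink the attainable function class, i.e. that we may freely concentrate the approximation in one summand; this is where one must be slightly careful for (E-bi), since the $\varepsilon$-prefactor on $\rho_{\mathrm{corr}}^{\mathrm{NN}}$ means that to reach a target correction of size $O(1)$ one needs $\rho_{\mathrm{corr}}^{\mathrm{NN}} = O(1/\varepsilon)$, which is harmless for fixed $\varepsilon$ but would degrade uniformly-in-$\varepsilon$ bounds. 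Since the present theorem fixes $\varepsilon$ and only claims convergence as $j\to\infty$, this causes no difficulty, and I would remark that the uniform-in-$\varepsilon$ (AP) consistency of the loss is inherited from the APNNs analysis of Section~\ref{sec:APNN} rather than re-proved here.
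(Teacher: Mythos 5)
Your proposal is correct, and the final step (bounding the loss by a Lipschitz constant times the $L^\infty$ approximation errors of $\rho_{\mathrm{bi}}$ and $g$, via Theorem 1 of \cite{liu2025asymptotic}) coincides with the paper's. Where you differ is in how the two summands of $\rho_{\mathrm{bi}}$ are assigned their targets. You concentrate the entire approximation in a single summand: for (I-bi) you set $\rho_{\mathrm{diff}}^{\mathrm{NN}}\equiv 0$ and let $\rho_{\mathrm{corr}}^{\mathrm{NN}}$ approximate $\rho$; for (E-bi) you let $\rho_{\mathrm{diff}}^{\mathrm{NN}}$ approximate $\rho$ and set $\rho_{\mathrm{corr}}^{\mathrm{NN}}\equiv 0$ (or absorb $(\rho-\rho_{\mathrm{diff}}^{\mathrm{NN}})/\varepsilon$ into the correction). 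This is a perfectly valid and arguably more self-contained way to establish that the bi-fidelity reparametrization does not shrink the attainable function class, which is indeed the only point at which the new structure intervenes. The paper instead splits the exact solution physically, writing $\rho=\rho^{(0)}+\rho^{(1)}$ with $\rho^{(0)}$ the drift-diffusion solution of \eqref{Diffusion} and $\rho^{(1)}$ the higher-order remainder, and then lets $\rho_{\mathrm{diff}}^{\mathrm{NN}}$ approximate $\rho^{(0)}$ and $\rho_{\mathrm{corr}}^{\mathrm{NN}}$ (respectively $\varepsilon\rho_{\mathrm{corr}}^{\mathrm{NN}}$) approximate $\rho^{(1)}$. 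What the paper's choice buys is consistency with the actual algorithm --- the diffusion network really is pre-trained on \eqref{Diffusion} --- and, for the explicit formulation, a correction target that stays $O(1)$ as $\varepsilon\to 0$ because $\rho^{(1)}$ is itself $O(\varepsilon)$; this sidesteps the $O(1/\varepsilon)$ amplitude issue you correctly flag, and is what lets the paper assert a constant $L$ independent of $\varepsilon$. What your choice buys is that it avoids the paper's somewhat unmotivated \emph{assumption} that $\|\varepsilon\rho_{\mathrm{corr},j}^{\mathrm{NN}}-\rho^{(1)}\|_{L^\infty}\le O(\delta_j)$, replacing it with an explicit construction; for the fixed-$\varepsilon$ statement actually claimed by the theorem, that is enough. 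Your remark that the $\delta$-closeness must also hold for the first derivatives entering the residuals (as in \eqref{UAT}) is the same caveat the paper invokes, so no gap there.
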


\begin{proof}
Expanding the analytic solution $\rho$ in the following form: 
\begin{equation}
\label{asymp-exp}
\rho(t,x) = \rho^{(0)}(t,x) + \rho^{(1)}(t,x;\e),
\end{equation}
where $\rho^{(0)}$ satisfies the limiting drift-diffusion equation \eqref{Diffusion}, and $\rho^{(1)}$ is a higher-order correction term. The micro-part $g$ is also smooth under the assumptions \eqref{condition}.

We construct the BI-APNNs solution sequence as follows. Let $\delta_j^{(1)},\delta_j^{(2)} > 0$ be a sequence such that they converge to zero as $j \to \infty$ (e.g., $\delta_j = 1/j$). By UAT, there exist networks such that:
\begin{align}
\label{approx-bounds}
\|\rho_{\mathrm{diff},j}^{\mathrm{NN}} - \rho^{(0)}\|_{L^\infty(K_1)} &\leq \delta_j^{(1)}, \\
\|g_j^{\mathrm{NN}} - g\|_{L^\infty(K_2)} &\leq \delta_j^{(2)},
\end{align}
where $K_1 = [0,T] \times \mathcal{D}$ and $K_2 = [0,T] \times \mathcal{D} \times \Omega$. Due to smoothness of the solutions, similar bounds hold for derivatives up to first and second orders.

Let $\delta_j = \text{max}(\delta_j^{(1)},\delta_j^{(2)}) $. We consider the two formulations: 
\begin{itemize}
    \item \textbf{Explicit formulation (E-bi):} Assume
    $\|\e\rho_{\mathrm{corr},j}^{\mathrm{NN}} - \rho^{(1)}\|_{L^\infty(K_1)} \leq O(\delta_j)$,
    the Bi-APNNs solution is $\rho_{\mathrm{bi},j} = \rho_{\mathrm{diff},j}^{\mathrm{NN}} + \varepsilon \rho_{\mathrm{corr},j}^{\mathrm{NN}}$, then we have:
    \begin{equation}
            \|\rho_{\mathrm{bi},j} - \rho\|_{L^\infty(K_1)} \leq \|\rho_{\mathrm{diff},j}^{\mathrm{NN}} - \rho^{(0)}\|_{L^\infty(K_1)} +  \|\e\rho_{\mathrm{corr},j}^{\mathrm{NN}} - \rho^{(1)}\|_{L^\infty(K_1)}  \leq  O(\delta_j).
    \end{equation}

    \item \textbf{Implicit formulation (I-bi):} Assume 
    $\|\rho_{\mathrm{corr},j}^{\mathrm{NN}} - \rho^{(1)}\|_{L^\infty(K_1)} \leq O(\delta_j)$, 
    the BI-APNNs solution is $\rho_{\mathrm{bi},j} = \rho_{\mathrm{diff},j}^{\mathrm{NN}} + \rho_{\mathrm{corr},j}^{\mathrm{NN}}$, then we have:
    \begin{equation}
            \|\rho_{\mathrm{bi},j} - \rho\|_{L^\infty(K_1)} \leq \|\rho_{\mathrm{diff},j}^{\mathrm{NN}} - \rho^{(0)}\|_{L^\infty(K_1)} + \|\rho_{\mathrm{corr},j}^{\mathrm{NN}} - \rho^{(1)}\|_{L^\infty(K_1)} \leq  O(\delta_j).
    \end{equation}

\end{itemize}

    
 Similar to  \cite[Theorem 1]{liu2025asymptotic}, there exists a constant $L > 0$ such that
\[
\mathcal{R}_{\mathrm{BI-APNNs}}^{\varepsilon}(\rho_{\mathrm{bi},j}, g_j) \leq L \left( \|\rho_{\mathrm{bi},j} - \rho\|_{L^{\infty}(K_1)} + \|g_j - g\|_{L^{\infty}(K_2)} \right) \leq O(\delta_j), 
\]
where $L$ is independent of $\varepsilon$. 



\end{proof}


\section{Numerical examples} \label{sec:Num}

In our numerical experiments, we will show several examples to illustrate the robustness and accuracy of our designed BI-APNNs method. We compare the reference solution obtained by traditional numerical methods, APNNs method, PINNs method and BI-APNNs methods in different regimes, ranging from the kinetic regime ($\e\approx O(1)$) to the diffusive regime ($\e \ll 1$), and observe that the BI-APNNs can capture the multiscale nature of the model thanks to the design of the loss function based on the macro-micro decomposition.

For the velocity discretization, the integral in $v$ is computed by using the Hermite quadrature rule with $N_v$ quadrature points. We adopt the Hermite polynomials in velocity discretization to provide more accurate velocity derivatives, a technique similar to the moment method \cite{JP00,SZ99}. See details in the Appendix. 

\textbf{Networks Architecture.} In our experiments, we use the feed-forward neural network (FNN) with one input layer, one output layer and $4$ hidden layers with $128$ neurons in each layer, unless otherwise specified. The hyperbolic tangent function (Tanh) is chosen as our activation function. This setting is adopted from previous work \cite{APNN-transport,Jin-Ma-Wu1}, as it has been validated to perform effectively on similar Boltzmann problems.


\textbf{Training Settings.} We train the neural network by Adam \cite{kingma2014adam} with Xavier initialization. We set epochs to be $20000$ and the learning rate to be $10^{-4}$, and use full batch for most of the following experiments in the numerical experiments unless otherwise specified. All the hyper-parameters are chosen by trial and error.  

\textbf{Loss Design.}  To compute the empirical risk losses for BI-APNNs in \eqref{empirical_Loss-APNNs} and \eqref{empirical_Loss-biAPNN}. For most of our experiments, we consider the spatial and temporal domains to be $[0,1]$ and $[0,0.1]$ respectively.  We choose the collocation points $\{(t_i, x_i, v_i)\}$ for $f(t,x,v)$ in the following way. For spatial points $x_i$, we select $99$ interior points evenly spaced in $[0, 1]$.  For temporal points $t_i$, we select $20$ interior points evenly spaced in the range $[0,0.1]$. We use the tensor product grid for the collocation points. For velocity points $v_i$, $N_v = 8$ points are generated by the Hermite quadrature rule.  The penalty parameters in \eqref{empirical_Loss-APNNs} and \eqref{empirical_Loss-biAPNN} are set to $(\lambda_1, \lambda_2) = (1,1)$. 

{\bf Performance metric}. The reference solutions are obtained by traditional AP scheme \cite{JP2000}. We compute the relative $\ell^2$  error of mean and standard deviation of the density $\rho(t,x)$ between our proposed neural network approximations and reference solutions, with the relative $\ell^2$ error defined by: 
\begin{equation}
\mathcal{E}(t):=\sqrt{\frac{\sum_j|\rho_{\theta}^{\text{NN}}(t,x_j)-\rho^{\text{ref}}(t,x_j)|^2}{\sum_j|\rho^{\text{ref}}(t,x_j)|^2}}.
\end{equation}

We run our experiments on a server with Intel(R) Xeon(R) Gold $6230$ and GPU $A40$.

\subsection{Problem I: Bi-APNNs for deterministic problem}
We first consider the following setup for the model equation \eqref{Boltz-eqn}:
$$ x\in [0,1], \qquad \phi = e^{-50 exp(1) (1/4-x)^2}, \qquad \sigma(v,w)=2. $$
Assume the initial data $\displaystyle f(t=0,x,v) = \frac{1}{\sqrt{\pi}} e^{-v^2}$ and incoming boundary conditions in space. 

\subsubsection{Forward Problem}
In the forward problem, we compare the performance of PINNs against several APNNs variants across different values of $\varepsilon$. To ensure a fair comparison, all models share an identical network architecture: four hidden layers with 128 neurons each. This configuration is adopted from previous work \cite{APNN-transport,Jin-Ma-Wu1} due to its established effectiveness on similar problems. 

Figure~\ref{fig:test1_compare_acc} compares the final density solution, $\rho(x, T=0.1)$, generated by each method against a traditional Asymptotic-Preserving (AP) scheme reference solution \cite{JP2000} for various $\varepsilon$. For a large epsilon, such as $\varepsilon=1$, all methods yield solutions that closely match the reference solution. However, as $\varepsilon$ decreases, the performance of standard PINNs degrades significantly. Notably, for $\varepsilon \ll 1$, the PINNs solution collapses to a trivial constant, a known failure mode. In this regime, two BI-APNNs maintain higher accuracy than the standard APNNs. To quantitatively validate these observations, we report the relative $\ell^2$ errors in Table~\ref{tab:test1_bi_forward}. The results confirm that BI-APNNs achieve the lowest error among all tested methods, particularly when $\varepsilon$ is small.

\begin{figure}[H]
    \centering
    \includegraphics[width=0.47\linewidth]{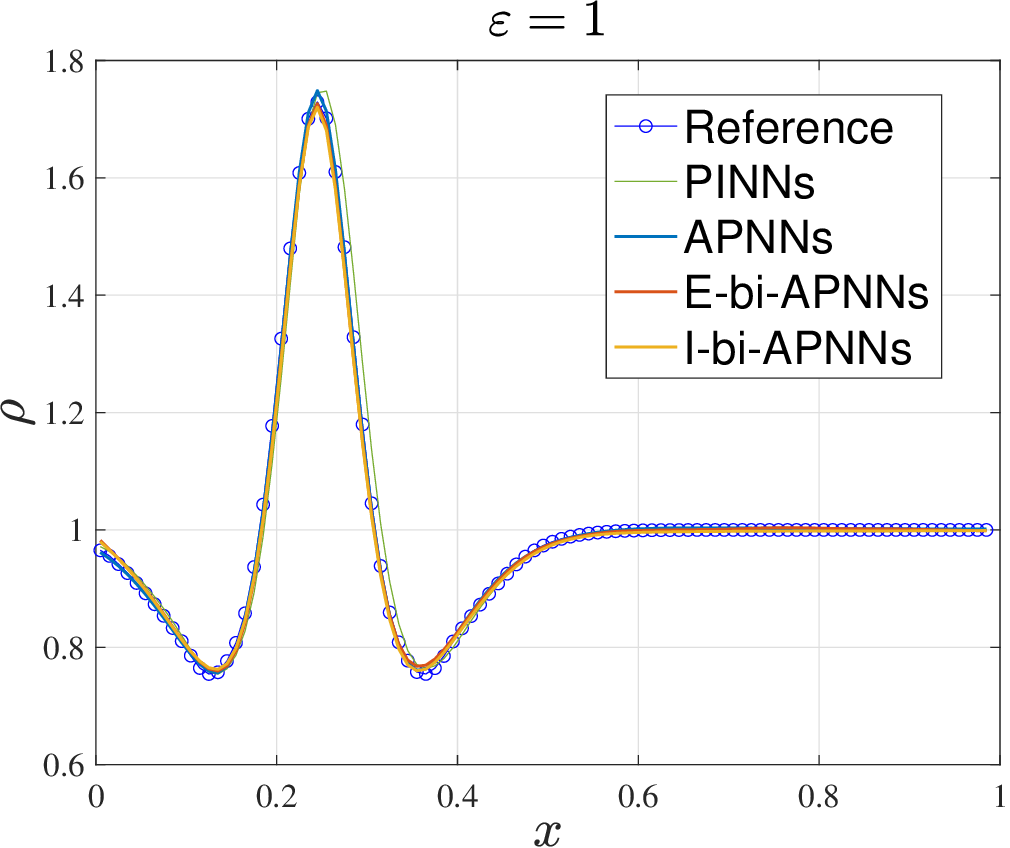}
     \includegraphics[width=0.47\linewidth]{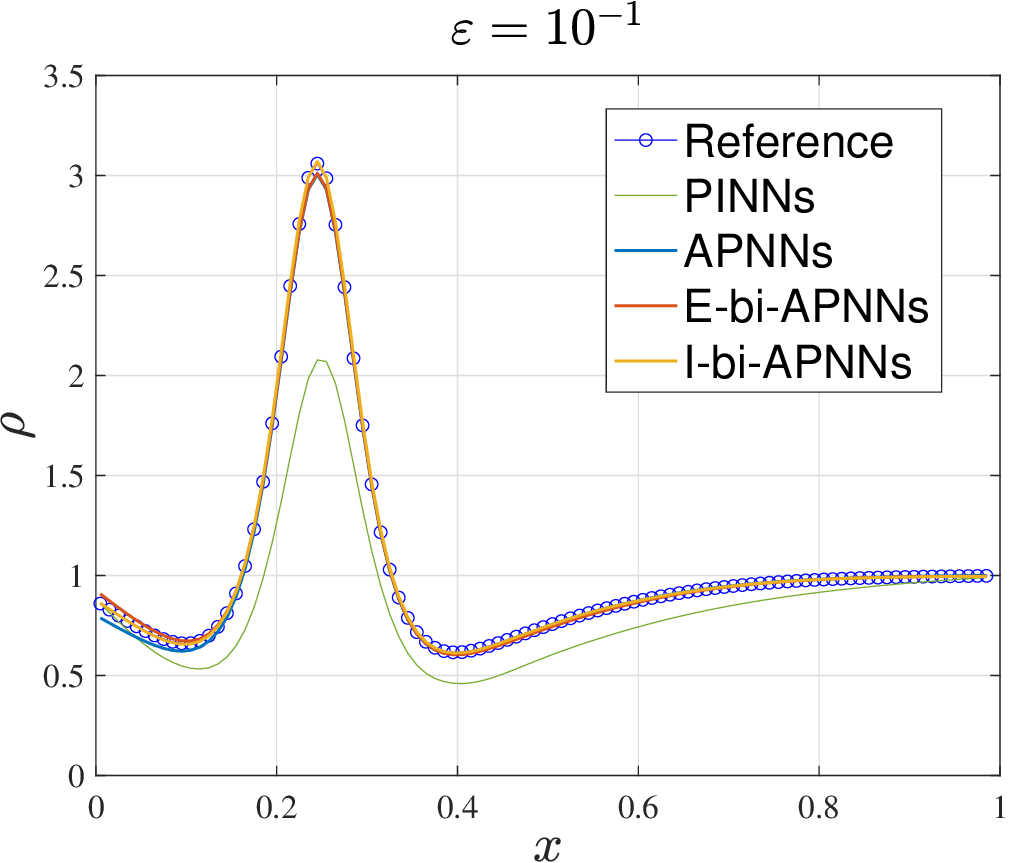}
    \includegraphics[width=0.47\linewidth]{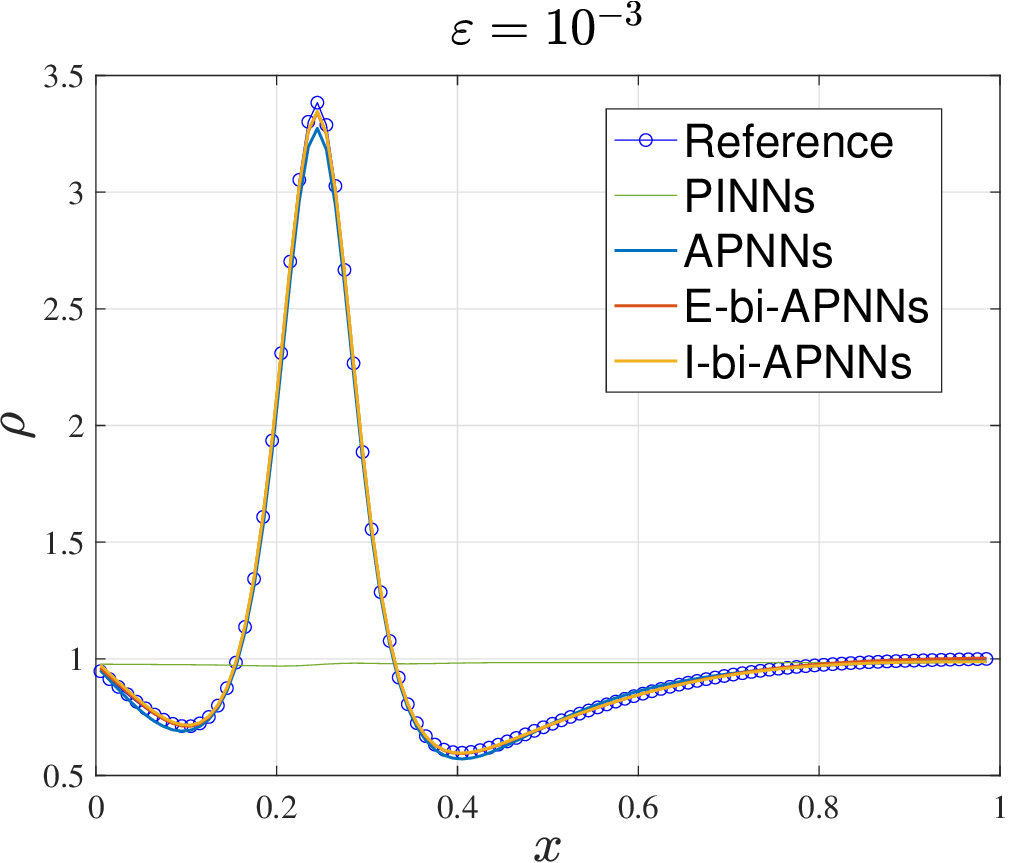}
    \includegraphics[width=0.47\linewidth]{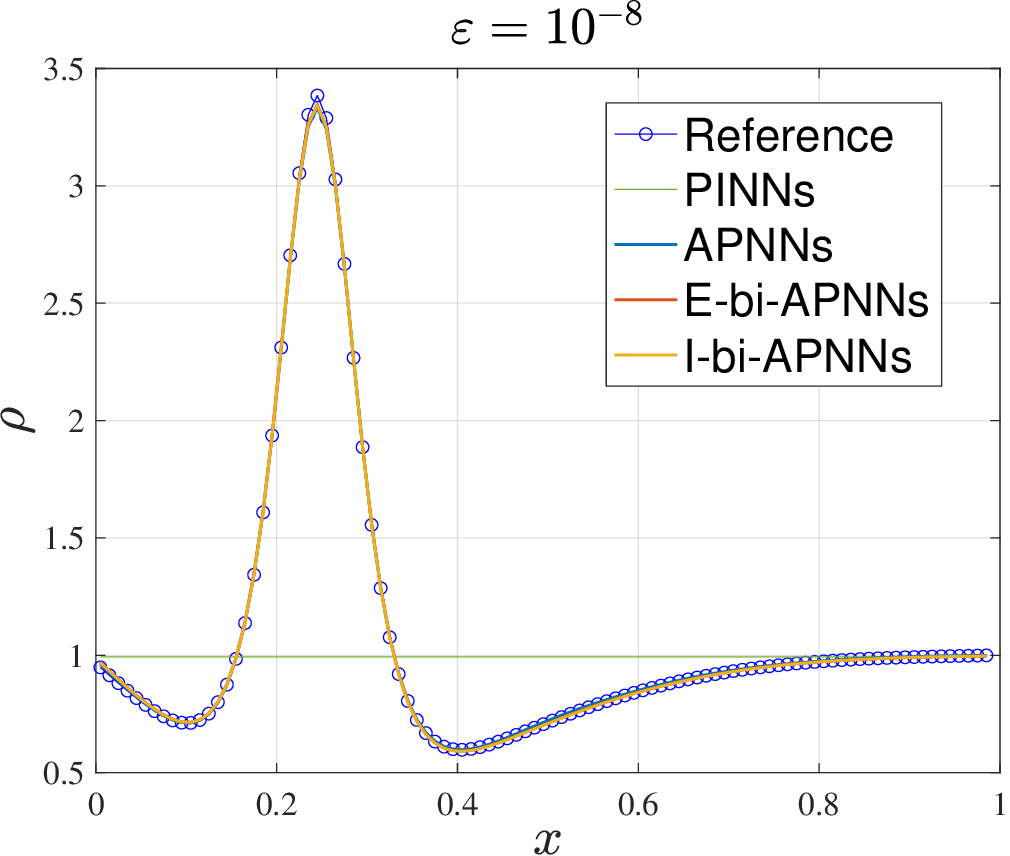}
    \caption{Problems I with different $\e$. 
Density $\rho$ for PINNs, APNNs, BI-APNNs, and reference solutions at $T=0.1$.}
    \label{fig:test1_compare_acc}
\end{figure}


\begin{table}[!htbp]
    \centering
    \begin{tabular}{l|c|c|c|c} 
        \toprule
        Method & \multicolumn{4}{c}{Relative $\ell^2$ error} \\
        \cmidrule(lr){2-5}
        & $\e=1$ & $\e=10^{-1}$ & $\e=10^{-3}$ & $\e=10^{-8}$ \\
        \midrule
        PINNs & $2.28 \times 10^{-2}$ & $2.51 \times 10^{-1}$ & $5.07 \times 10^{-1}$ & $5.04 \times 10^{-1}$ \\
        APNNs & $\mathbf{1.08 \times 10^{-2}}$ & $1.94 \times 10^{-2}$ & $2.38 \times 10^{-2}$ & $1.06 \times 10^{-2}$ \\
        E-bi-APNNs & $1.16\times 10^{-2}$ & $1.33 \times 10^{-2}$ & $\mathbf{7.15 \times 10^{-3}}$ & $\mathbf{7.21 \times 10^{-3}}$ \\
        I-bi-APNNs & $1.19 \times 10^{-2}$ & $\mathbf{5.65 \times 10^{-3}}$ & $7.65 \times 10^{-3}$ & $7.64 \times 10^{-3}$ \\
        \bottomrule
    \end{tabular}
    \caption{Problems I. Relative $\ell^2$ error comparison for different methods with different $\e$ at the final time at $T=0.1$. }
    \label{tab:test1_bi_forward}
\end{table}

Figure~\ref{fig:BI_AP_training} compares the training convergence of BI-APNNs and APNNs across various $\varepsilon$ values under identical network settings. The loss curves clearly show that BI-APNNs converge significantly faster, an advantage that is particularly pronounced as $\varepsilon$ becomes small.

\begin{figure}[H]
    \centering
    \includegraphics[width=0.49\linewidth]{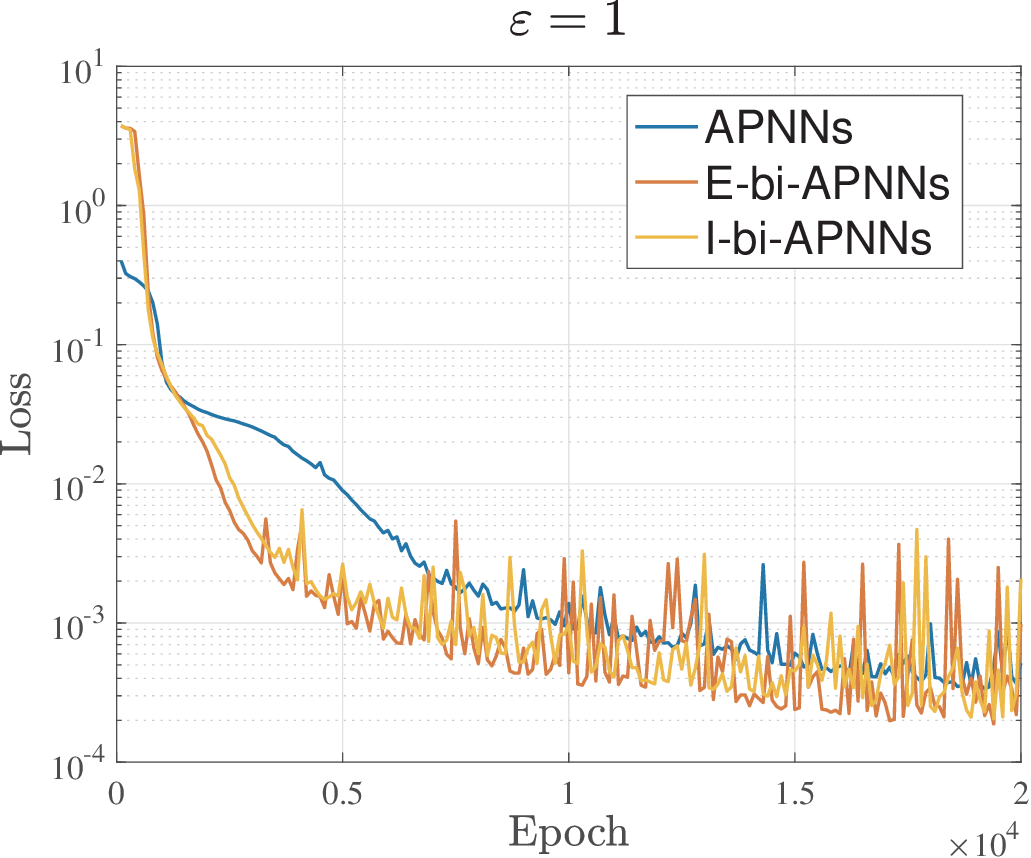}
    \includegraphics[width=0.49\linewidth]{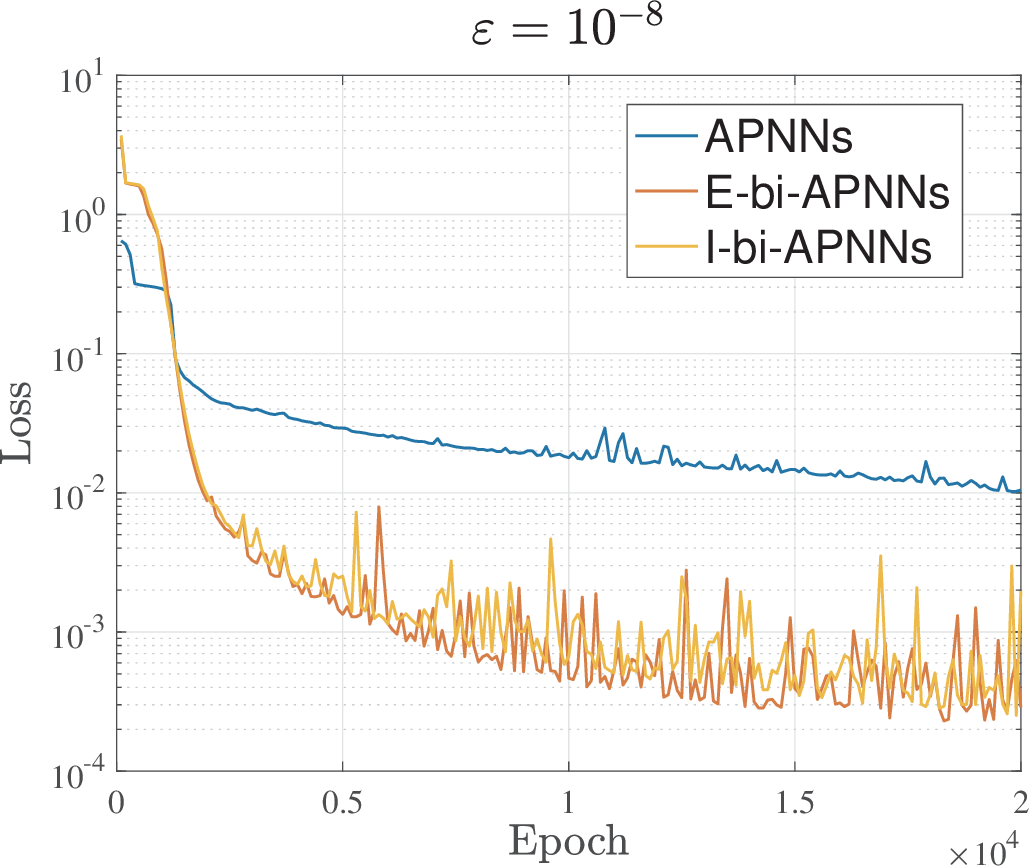}
    \caption{Problem I with $\e=1$ and $\e = 10^{-8}$. Comparison of training losses between Bi-APNNs and APNNs. 
    }
    \label{fig:BI_AP_training}
\end{figure}

Figure~\ref{fig:BI_AP_relative_test1} presents the convergence of the relative $\ell^2$ error for both BI-APNNs and APNNs across different $\varepsilon$ values. The figures clearly demonstrate that our proposed BI-APNNs converge substantially faster thanks to the pretrained diffusion term, which provides a reliable initial approximation and guides the network toward faster convergence, an advantage that is most evident in the small $\varepsilon$ regime.

\begin{figure}[H]
    \centering
    \includegraphics[width=0.49\linewidth]{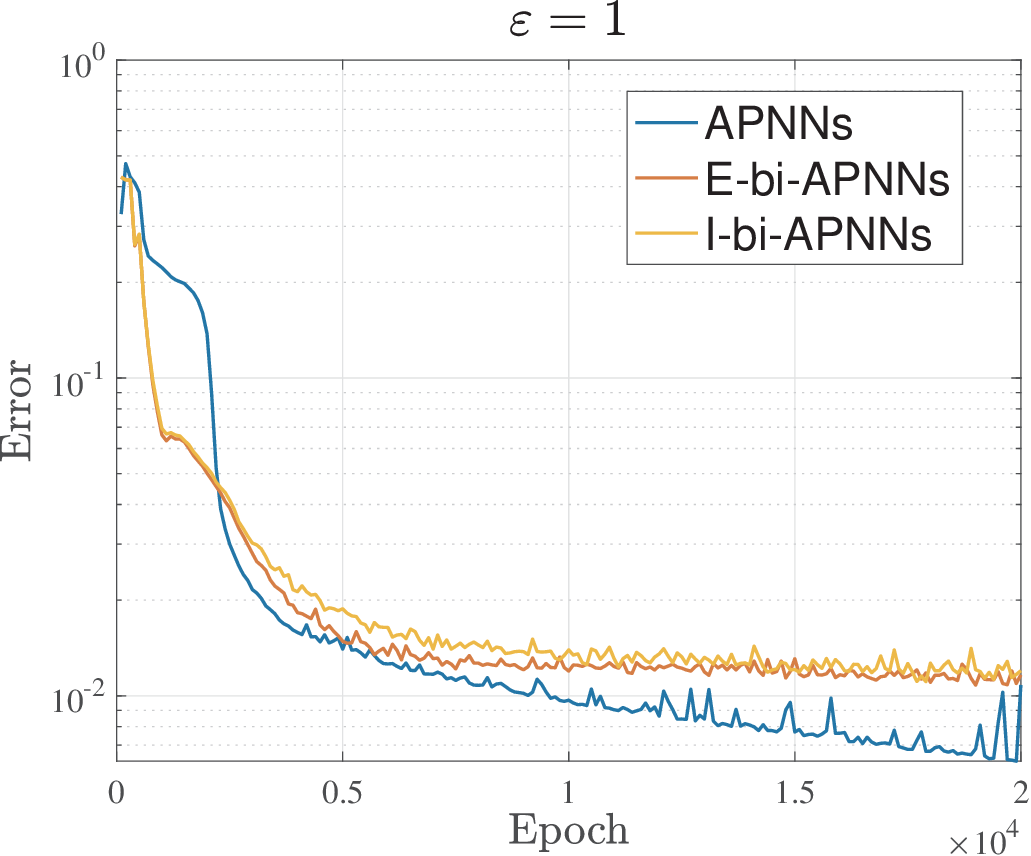}
    \includegraphics[width=0.49\linewidth]{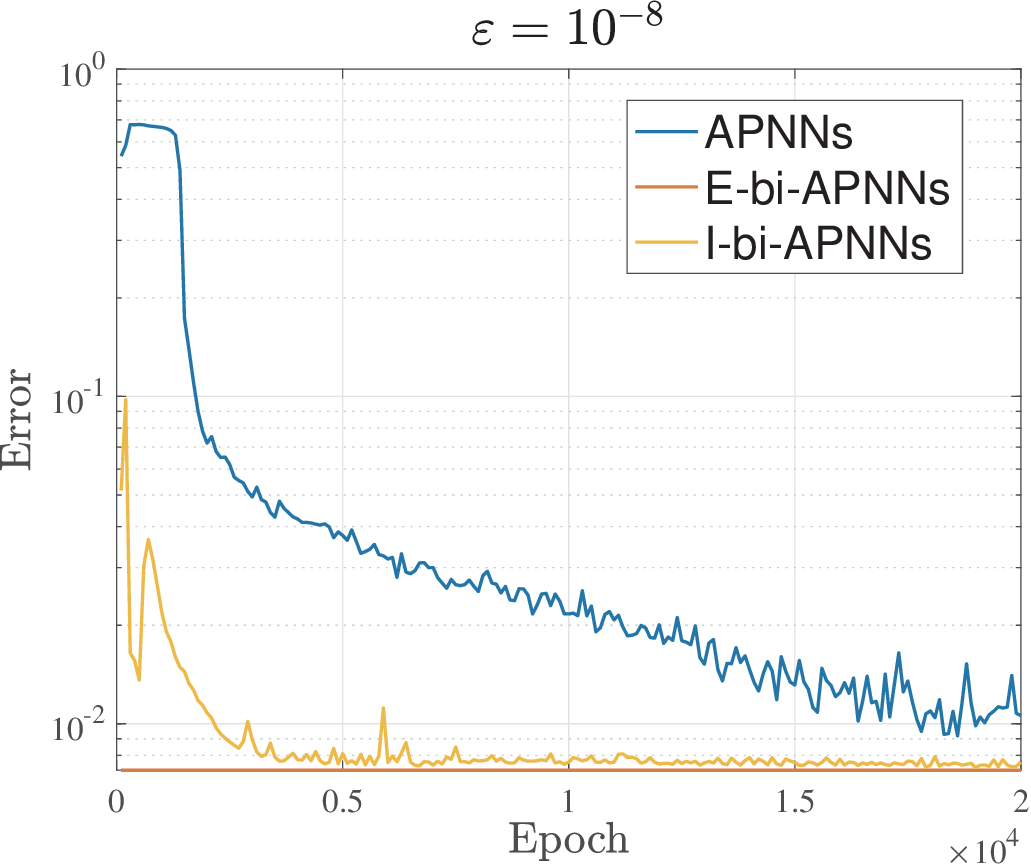}
    \caption{Problem I with $\e=1$ and $\e = 10^{-8}$. Comparison of convergence in relative errors of Bi-APNNs and APNNs.}
    \label{fig:BI_AP_relative_test1}
\end{figure}



To further demonstrate the advantages of our methods in training efficiency, we vary the network depth and width for the pretrained diffusion system \eqref{Loss-diffusion_empirical}, APNNs, and BI-APNNs models. The results for the pretrained diffusion equation presented in Table~\ref{tab:test1_bi_fidelity_diffusion}, indicate that a shallow network with just one to two hidden layers is sufficient to obtain errors in the $O(10^{-2}) \sim O(10^{-3})$ range. We also noted that the neuron count did not significantly impact computation time, so we fixed this parameter in subsequent experiments. For the APNNs and BI-APNNs models, the results for different scales, $\varepsilon = 1$ and $\varepsilon = 10^{-8}$, are detailed in Tables~\ref{tab:test1_bi_fidelity_ep1} and~\ref{tab:test1_bi_fidelity_ep1e-8}. Compared to BI-APNNs,  the standard APNNs often require a deeper architecture of three to four hidden layers to achieve comparable accuracy. Notably, in the small $\varepsilon$ regime, the network for the correction term, $\rho^{\text{NN}}_{\textit{corr}}$, captures the small-scale phenomena and achieves an error of $O(10^{-3})$ even without any hidden layers. This result demonstrates our method's strong performance in significantly reducing computational resources.
    

\begin{table}[!htbp]
    \centering
    \scalebox{1}{
    \begin{tabular}{c|c|c|c}
        \toprule
        \multirow{2}{*}{Layer / Neuron} & \multicolumn{3}{c}{Relative Error} \\
        \cmidrule(lr){2-4}
        & 32 & 64 & 128 \\
        \midrule
        0 & $6.54 \times 10^{-2}$ & $6.40 \times 10^{-2}$ & $5.68 \times 10^{-1}$ \\
        1 & $1.01 \times 10^{-2}$ & $9.06 \times 10^{-3}$ & $7.09 \times 10^{-3}$ \\
        2 & $5.86 \times 10^{-3}$ & $5.51 \times 10^{-3}$ & $9.08 \times 10^{-3}$ \\
        3 & $7.61 \times 10^{-3}$ & $8.84 \times 10^{-3}$ & $5.68 \times 10^{-3}$ \\
        4 & $7.07 \times 10^{-3}$ & $5.24 \times 10^{-3}$  & $6.85 \times 10^{-3}$ \\
        \bottomrule
    \end{tabular}
    } 
    
    \caption{Problems I. Comparison of the relative error for the diffusion Equation under different layers and neurons.}
    \label{tab:test1_bi_fidelity_diffusion}
\end{table}

\begin{table}[!htbp]
    \centering
    \scalebox{0.85}{
    \begin{tabular}{c|c|c|c|c}
        \toprule
        \multirow{2}{*}{Method} & \multirow{2}{*}{Layer / Neuron} & \multicolumn{3}{c}{Relative Error} \\
        \cmidrule(lr){3-5}
        & & 32 & 64 & 128 \\
        \midrule
        \multirow{5}{*}{\centering APNNs} 
        & 0 & $1.95 \times 10^{-1}$ & $1.96 \times 10^{-1}$ & $1.96 \times 10^{-1}$ \\
        & 1 & $2.67 \times 10^{-2}$ & $2.82 \times 10^{-2}$ & $2.39 \times 10^{-2}$ \\
        & 2 & $5.94 \times 10^{-3}$ & $4.00 \times 10^{-3}$ & $5.40 \times 10^{-3}$ \\
        & 3 & $3.29 \times 10^{-3}$ & $4.36 \times 10^{-3}$ & $4.32 \times 10^{-3}$ \\
        & 4 & $3.54 \times 10^{-3}$ & $4.78 \times 10^{-3}$ & $6.19 \times 10^{-3}$ \\
        \midrule
        \multirow{5}{*}{\centering Bi-APNNs-Explicit} 
        & 0 & $4.12 \times 10^{-1}$ & $4.14 \times 10^{-1}$ & $4.15 \times 10^{-1}$ \\
        & 1 & $1.50 \times 10^{-1}$ & $9.41 \times 10^{-2}$ & $3.51 \times 10^{-2}$ \\
        & 2 & $2.62 \times 10^{-2}$ & $1.14 \times 10^{-2}$ & $1.37 \times 10^{-2}$ \\
        & 3 & $9.33 \times 10^{-3}$ & $7.97 \times 10^{-3}$ & $1.18 \times 10^{-2}$ \\
        & 4 & $1.71 \times 10^{-2}$ & $9.31 \times 10^{-3}$ & $1.05 \times 10^{-2}$ \\
        \midrule
        \multirow{5}{*}{\centering Bi-APNNs-Implicit} 
        & 0 & $4.11 \times 10^{-1}$ & $4.13 \times 10^{-1}$ & $4.15 \times 10^{-1}$ \\
        & 1 & $1.36 \times 10^{-1}$ & $9.56 \times 10^{-2}$ & $3.71 \times 10^{-2}$ \\
        & 2 & $3.51 \times 10^{-2}$ & $1.42 \times 10^{-2}$ & $2.13 \times 10^{-2}$ \\
        & 3 & $1.80 \times 10^{-2}$ & $1.21 \times 10^{-2}$ & $1.28 \times 10^{-2}$ \\
        & 4 & $1.38 \times 10^{-2}$ & $1.27 \times 10^{-2}$ & $1.10 \times 10^{-2}$ \\
        \bottomrule
    \end{tabular}
    } 
    \caption{Problems I. Comparison of Relative Error for APNN, bi-APNN-implicit, and bi-APNN-explicit methods under different layers and neurons, with $\varepsilon=1$.}
    \label{tab:test1_bi_fidelity_ep1}
\end{table}

\begin{table}[!htbp]
    \centering
    \scalebox{0.85}{
    \begin{tabular}{c|c|c|c|c}
        \toprule
        \multirow{2}{*}{Method} & \multirow{2}{*}{Layer / Neuron} & \multicolumn{3}{c}{Relative Error} \\
        \cmidrule(lr){3-5}
        & & 32 & 64 & 128 \\
        \midrule
        \multirow{5}{*}{\centering APNNs} 
        & 0 & $5.04 \times 10^{-1}$ & $5.21 \times 10^{-1}$ & $5.11 \times 10^{-1}$ \\
        & 1 & $1.06 \times 10^{-1}$ & $1.23 \times 10^{-2}$ & $8.91 \times 10^{-3}$ \\
        & 2 & $1.05 \times 10^{-2}$ & $8.64 \times 10^{-3}$ & $1.37 \times 10^{-2}$ \\
        & 3 & $8.56 \times 10^{-3}$ & $9.46 \times 10^{-3}$ & $1.12 \times 10^{-2}$ \\
        & 4 & $6.89 \times 10^{-3}$ & $1.19 \times 10^{-2}$ & $7.58 \times 10^{-3}$ \\
        \midrule
        \multirow{5}{*}{\centering Bi-APNNs-Explicit} 
        & 0 & $7.21 \times 10^{-3}$ & $7.21 \times 10^{-3}$ & $7.21 \times 10^{-3}$ \\
        & 1 & $7.21 \times 10^{-3}$ & $7.21 \times 10^{-3}$ & $7.21 \times 10^{-3}$ \\
        & 2 & $7.21 \times 10^{-3}$ & $7.21 \times 10^{-3}$ & $7.21 \times 10^{-3}$ \\
        & 3 & $7.21 \times 10^{-3}$ & $7.21 \times 10^{-3}$ & $7.21 \times 10^{-3}$ \\
        & 4 & $7.21 \times 10^{-3}$ & $7.21 \times 10^{-3}$ & $7.21 \times 10^{-3}$ \\
        \midrule
        \multirow{5}{*}{\centering Bi-APNNs-Implicit} 
        & 0 & $7.02 \times 10^{-3}$ & $7.35 \times 10^{-3}$ & $7.38 \times 10^{-3}$ \\
        & 1 & $7.22 \times 10^{-3}$ & $6.88 \times 10^{-3}$ & $6.78 \times 10^{-3}$ \\
        & 2 & $7.06 \times 10^{-3}$ & $6.94 \times 10^{-3}$ & $7.01 \times 10^{-3}$ \\
        & 3 & $6.77 \times 10^{-3}$ & $7.10 \times 10^{-3}$ & $7.12 \times 10^{-3}$ \\
        & 4 & $7.15 \times 10^{-3}$ & $7.06 \times 10^{-3}$ & $7.32 \times 10^{-3}$ \\
        \bottomrule
    \end{tabular}
    } 
    \caption{Problems I. Comparison of Relative Error for APNN, bi-APNN-explicit, and bi-APNN-implicit methods under different layers and neurons, with $\varepsilon=10^{-8}$.}
    \label{tab:test1_bi_fidelity_ep1e-8}
\end{table}

\subsubsection{Inverse problem}
In this section, we will examine the the proposed Bi-APNNs framework for  an inverse problem: inferring the scattering coefficient from measurement data. To assess the robustness and practical utility of our method, we focus on a particularly challenging scenario where only partial observation data is available. The performance of the Bi-APNNs is evaluated across different physical regimes. For a direct comparison, we also solve the same inverse problem using the standard APNNs approach as a baseline, allowing for a clear assessment of the improvements offered by the bi-fidelity formulation.

In real-world applications, the microscopic term $g$ is difficult to measure, we assume that measurement data can only be obtained for the macroscopic density, $\rho$. We generate a synthetic dataset, denoted as $\{\left(t_d^i, x_d^i\right), \rho_{\text{obs}}^i\}_{i=1}^{N_d}$, using the classical AP numerical scheme developed in \cite{JP2000}.

To solve this inverse problem using either the BI-APNNs or APNNs framework, we add a data misfit loss as below to the original empirical loss function \eqref{empirical_Loss-APNNs} and \eqref{empirical_Loss-biAPNN}.   This term, which we denote as $\mathcal{L}_{\text{data}}$, quantifies the misfit between the network's prediction for the density and the observed data via the mean squared error: 
\begin{equation}
    \mathcal{L}_{\text{data}}^{\rho}(\theta)^ = \frac{w_d^{\rho}}{N_{d_1}} \sum_{i=1}^{N_{d_1}} \left| \rho^{\mathrm{NN}}(t_{d}^i, x_{d}^i; \theta) -\rho(t_{d}^i,x_{d}^i) \right|^2.
    \label{Inverse_APNN}
\end{equation}

Since PINNs method can not infer $\sigma$ when $\e$ is small even using full observation data \cite{liu2025asymptotic}, here we only test Bi-APNNs and APNNs' performance on inferring the target parameter $\sigma$ when $\e$ is small. In the inverse problem, we use the same network settings as in the forward problem. We use $100$ random measurements for  $\rho(t_d^i,x_d^i$) to estimate the target parameter $\sigma=2$ under different initial guesses $\sigma_0 = 0.5, 1.0, 1.5, 1.7. 1.9$ under both $\varepsilon = 10^{-3}$ and $\varepsilon = 10^{-8}$. While a fixed learning rate works well for the forward problem, the ill-posed nature of the inverse problem requires a more careful optimization strategy to ensure stable convergence. A high learning rate near the true parameter may cause oscillations and lead to suboptimal convergence. For the inverse problem, we employ the Adam optimizer, and we choose the learning rate decay schedule to be step decay. Specifically, we use an initial learning rate of $10^{-4}$ with a multiplicative decay factor of $0.8$. In addition, to maintain training stability, the learning rate is not allowed to fall below a minimum value of $10^{-6}$.

The proposed BI-APNNs frameworks demonstrate a notable improvement in performance for the inverse problem compared to the standard APNNs. This is evidenced by the convergence behavior of the estimated target parameter, $\sigma$, as illustrated in Figure~\ref{fig:test1_inverse}. Figure~\ref{fig:test1_inverse} displays the mean of the estimated parameter $\sigma$, along with its corresponding confidence interval, computed over five different initial guesses $\sigma_0$.  For a quantitative comparison, Tables~\ref{tab:test1_ep1e-3} and \ref{tab:test1_ep1e-8} present the relative errors between the estimated parameter $\hat{\sigma}$ and the true parameter $\sigma$ for both methods. The results clearly indicate that the proposed two BI-APNNs formulations yield more accurate parameter estimations with much less variability, confirming the advantage of our approach. 
\begin{figure}[H]
    \centering
    \includegraphics[width=0.49\linewidth]{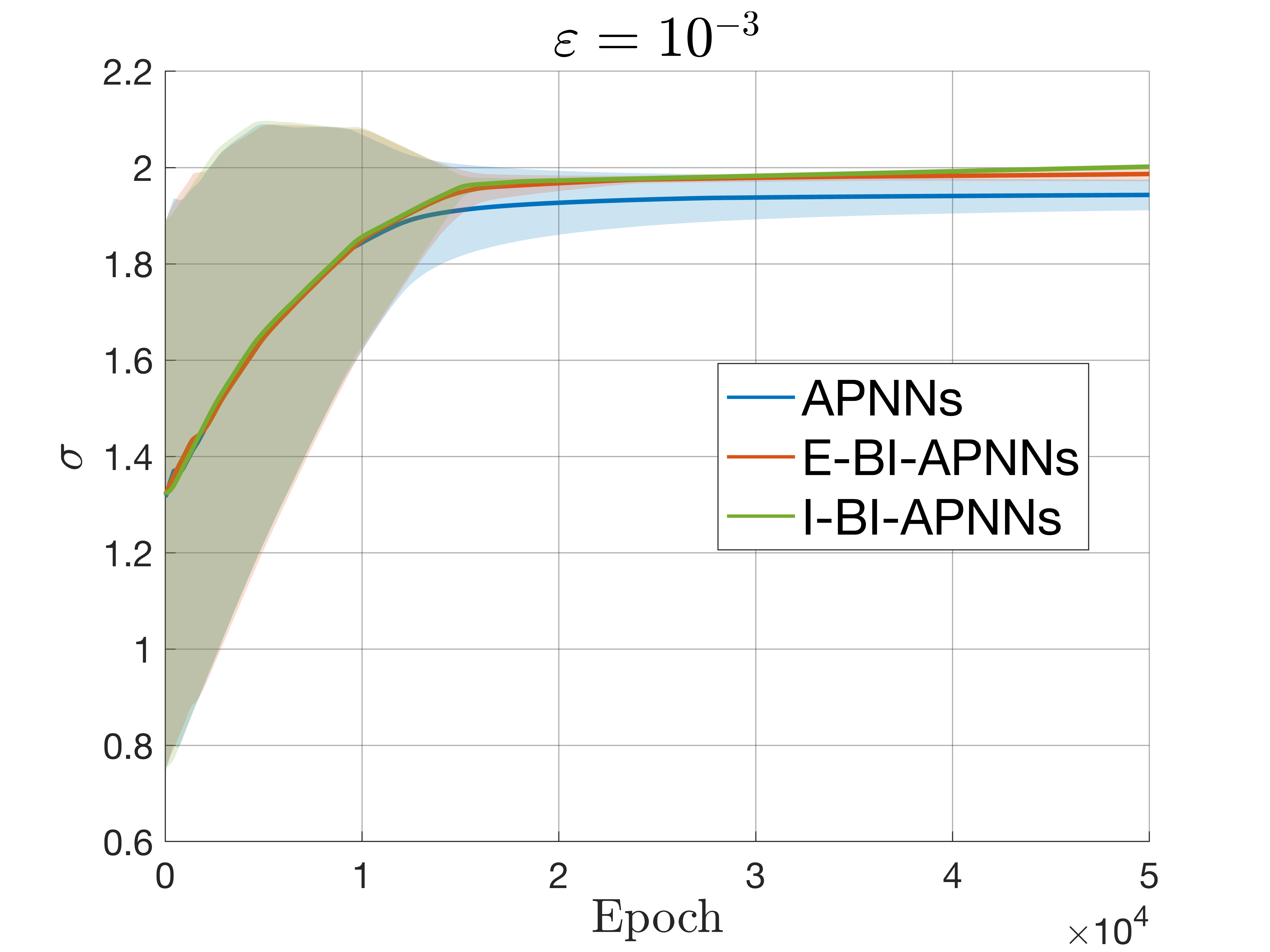}
    \includegraphics[width=0.49\linewidth]{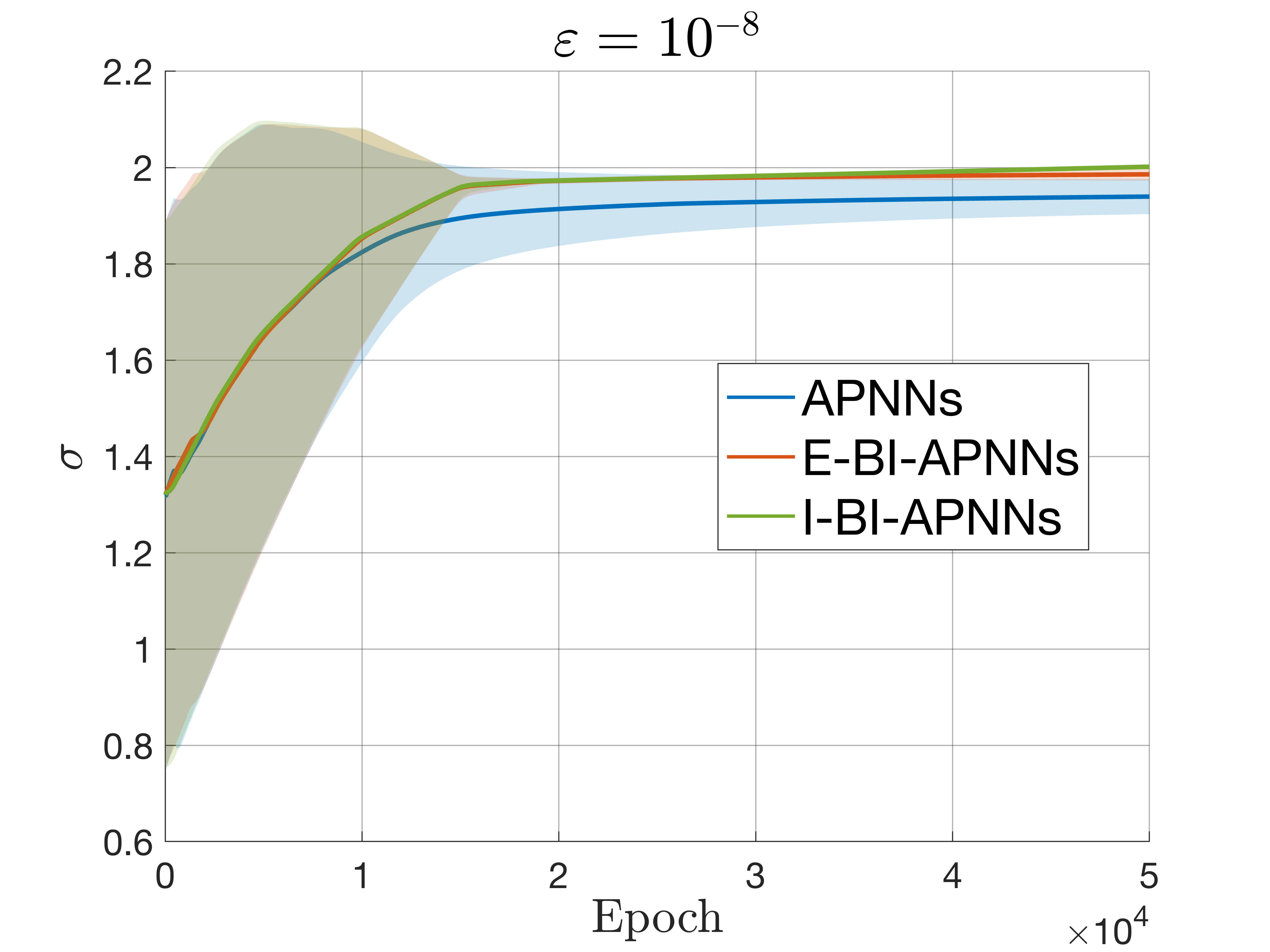}
    \caption{Inverse problem for Problem I. Comparison between three methods under different initial guesses with $\e = 10^{-3}$ and $\e = 10^{-8}$.}
    \label{fig:test1_inverse}
\end{figure}

\begin{table}[H]
    \centering
    \vspace{-15pt}
            \renewcommand{\multirowsetup}{\centering}
            \setlength{\tabcolsep}{5.5pt}
            \scalebox{1}{
                \begin{tabular}{l|c|c|c|c|c|c}
                    \toprule
                    \multirow{2}{*}{Method} & \multicolumn{6}{c}{Initial Guess ($\sigma_0$)} \\
                    \cmidrule(lr){2-7}
                    & 0.5 & 1.0 & 1.5 & 1.7 & 1.9 & Average \\
                    \midrule
                    APNNs & 1.893 & 1.931 & 1.948 & 1.946 & 1.947 & 1.933 \\
                    Bi-APNNs-Explicit & 1.986 & 1.984 & 1.966 & 1.990 & 2.008 & 1.987 \\
                    Bi-APNNs-Implicit & 1.994 & 1.998 & 2.004 & 2.006 & 2.007 & \textbf{2.001} \\
                    \bottomrule
                \end{tabular}
        }
            \caption{Problem I inverse problem. Comparison between three methods under different initial guesses with $\e = 10^{-3}$.}
    \label{tab:test1_ep1e-3}
\end{table}

\begin{table}[H]
    \centering
    \vspace{-15pt}
            \renewcommand{\multirowsetup}{\centering}
            \setlength{\tabcolsep}{5.5pt}
            \scalebox{1}{
                \begin{tabular}{l|c|c|c|c|c|c}
                    \toprule
                    \multirow{2}{*}{Method} & \multicolumn{6}{c}{Initial Guess ($\sigma_0$)} \\
                    \cmidrule(lr){2-7}
                    & 0.5 & 1.0 & 1.5 & 1.7 & 1.9 & Average \\
                    \midrule
                    APNNs & 1.892 & 1.910 & 1.945 & 1.944 & 1.949 & 1.928 \\
                    Bi-APNNs-Explicit & 1.976 & 1.987 & 1.979 & 1.980 & 2.008 & 1.986 \\
                    Bi-APNNs-Implicit & 1.994 & 1.999 & 2.003 & 2.006 & 2.007 & \textbf{2.002} \\
                    \bottomrule
                \end{tabular}
        }
            \caption{Problem I inverse problem. Comparison between three methods under different initial guesses with $\e = 10^{-8}$.}
    \label{tab:test1_ep1e-8}
\end{table}

\subsection{Problem II: Boltzmann-Poisson equation}
In the second test, we consider the Boltzmann-Poisson system \eqref{eqn:BP}. Assume the incoming boundary conditions given by 
\begin{equation}\label{BC}
f(t,x,v)\Big|_{x_L} = F_L(v), \qquad
f(t,x,-v)\Big|_{x_R} = F_R(v), \quad \text{ for } v>0, 
\end{equation}
and Maxwellian function as the initial data
$ f(x,v,t=0)= M(v) $.  
Let the applied bias voltage $V=5$, $\beta=0.002$ and doping profile $c(x)$ be given by
$$ c(x) = 1 - (1-m)\rho(0,t=0)\left[ \tanh(\frac{x-0.3}{0.02}) - \tanh(\frac{x-0.7}{0.02})\right], $$
with $m = (1-0.001)/2$.  We set $\Delta x = 0.01$ and $\Delta t = 0.005$. 

\subsubsection{Forward problem}
In this example, besides $\rho(t_i,x_i), g(t_i,x_i,v_i)$ and $f(t_i,x_i,v_i)$, which are approximated by the same neural networks in the previous example, we approximate the extra latent solution $\phi(t_i,x_i)$ by one deep neural network with $14$ hidden layers, using the empirical loss of \eqref{empirical_Loss-APNNs} and \eqref{empirical_Loss-biAPNN} for APNNs and Bi-APNNs, respectively. 

The figures \ref{fig:test2_compare_acc_rho} and \ref{fig:test2_compare_acc_phi}  display the solutions obtained from our BI-APNN framework, the standard APNN method, and the reference solutions generated by a deterministic AP solver \cite{JP2000}. We can observe that the solutions for both $\rho$ and $\phi$ from the BI-APNNs show excellent agreement with the reference solutions across all regimes, including for very small values of $\varepsilon$. While the standard APNNs also captures the overall behavior of the solution, it exhibits slight but noticeable discrepancies, which it is obviously less accurate than the implicit BI-APNNs. To quantitatively validate these observations,  we summarize the relative $\ell^2$ errors for all methods in Table~\ref{tab:test2_bi_fidelity}. The results confirm that the BI-APNNs framework consistently yields the most accurate results, with its advantage being most pronounced in the small-$\varepsilon$ regime.

\begin{figure}[H] 
    \centering
    \includegraphics[width=0.49\linewidth]{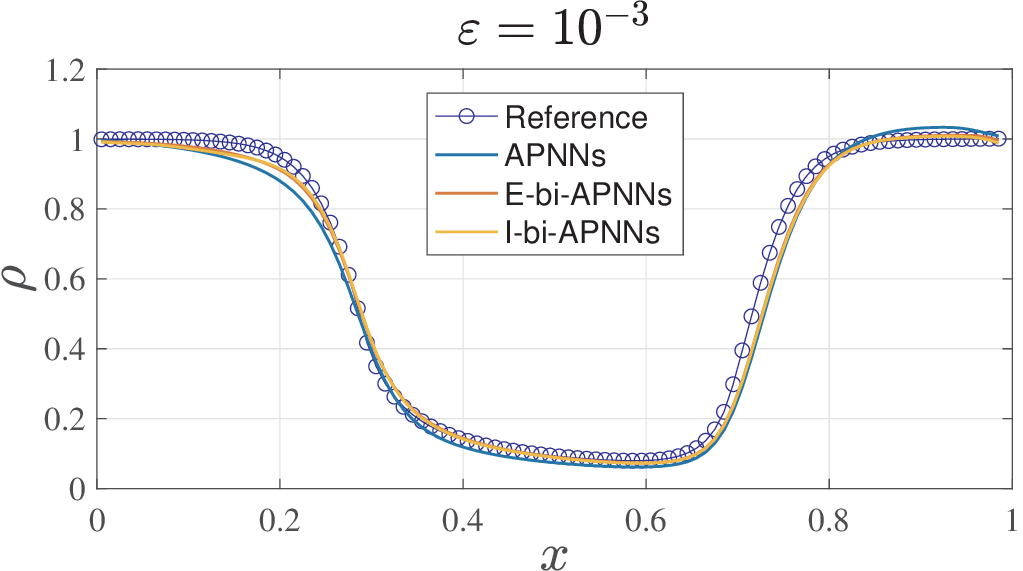}
    \includegraphics[width=0.49\linewidth]{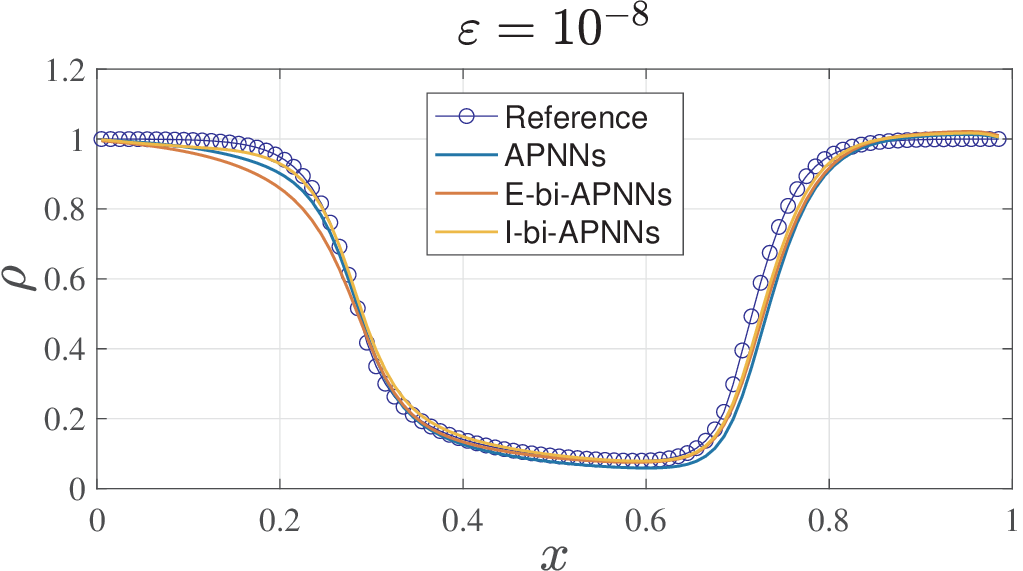}
    \caption{Bi-APNNs for deterministic problem. Comparison of $\rho$ with different methods.}
    \label{fig:test2_compare_acc_rho}
\end{figure}

\begin{figure}[H] 
    \centering
    \includegraphics[width=0.49\linewidth]{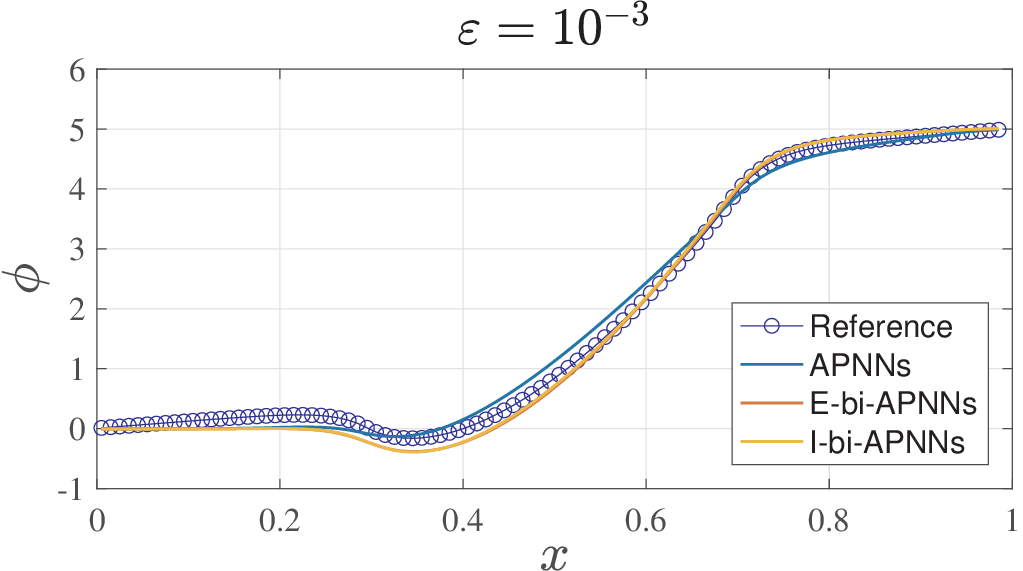}
     \includegraphics[width=0.49\linewidth]{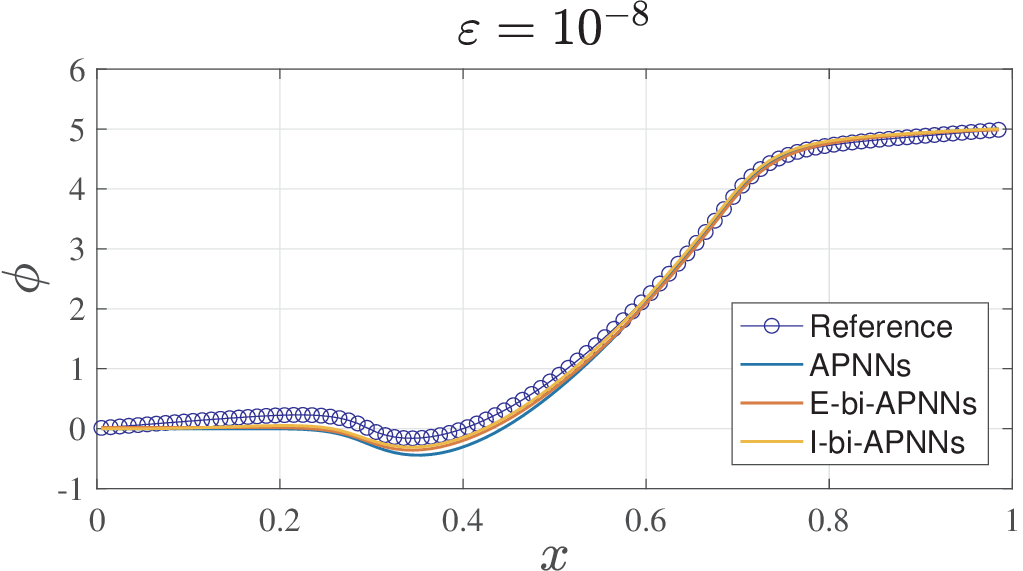}
    \caption{Bi-APNNs for deterministic problem. Comparison of $\phi$ with different methods.}
    \label{fig:test2_compare_acc_phi}
\end{figure}

\begin{table}[!htbp]
    \centering
    \begin{tabular}{l|c|c|c|c} 
        \toprule
        \multirow{2}{*}{Method} & \multicolumn{2}{c}{Relative $\ell^2$ error ($\rho$)} & \multicolumn{2}{c}{Relative $\ell^2$ error ($\phi$)} \\
        \cmidrule(lr){2-3} \cmidrule(lr){4-5}
        & $\e=10^{-3}$ & $\e=10^{-8}$ & $\e=10^{-3}$ & $\e=10^{-8}$ \\
        \midrule
        APNNs & $6.28 \times 10^{-2}$ &$6.23\times10^{-2}$ & $5.78 \times 10^{-2}$ & $6.16 \times 10^{-2}$ \\
        Bi-APNNs-Explicit & $\mathbf{4.29 \times 10^{-2}}$ & $6.22\times10^{-2}$ & $5.71 \times 10^{-2}$ & $4.85 \times 10^{-2}$\\
        Bi-APNNs-Implicit & $4.46 \times 10^{-2}$ & $\mathbf{3.83 \times 10^{-2}}$ & $\mathbf{4.42 \times 10^{-2}}$ &  $ \mathbf{3.90 \times 10^{-2}}$\\
        \bottomrule
    \end{tabular}
    \caption{Problems II. Relative $\ell^2$ error comparison between reference solution and other methods with different $\epsilon$ at the final time at $T=0.1$. }
    \label{tab:test2_bi_fidelity}
\end{table}

To further demonstrate that the BI-APNNs framework also offers an advantage in terms of training efficiency, particularly in the small-$\varepsilon$ regime, we plot the training loss in Figure \ref{fig:test2BI_AP_training}. The plots clearly show that for small values of $\varepsilon$ (e.g., $10^{-3}$ and $10^{-8}$), the loss for the BI-APNNs converges  faster than that of the standard APNNs.  This confirms that the bi-fidelity formulation not only improves accuracy but also accelerates the training process in the challenging fluid-dynamic limit.
\begin{figure}[H]
    \centering
    \includegraphics[width=0.49\linewidth]{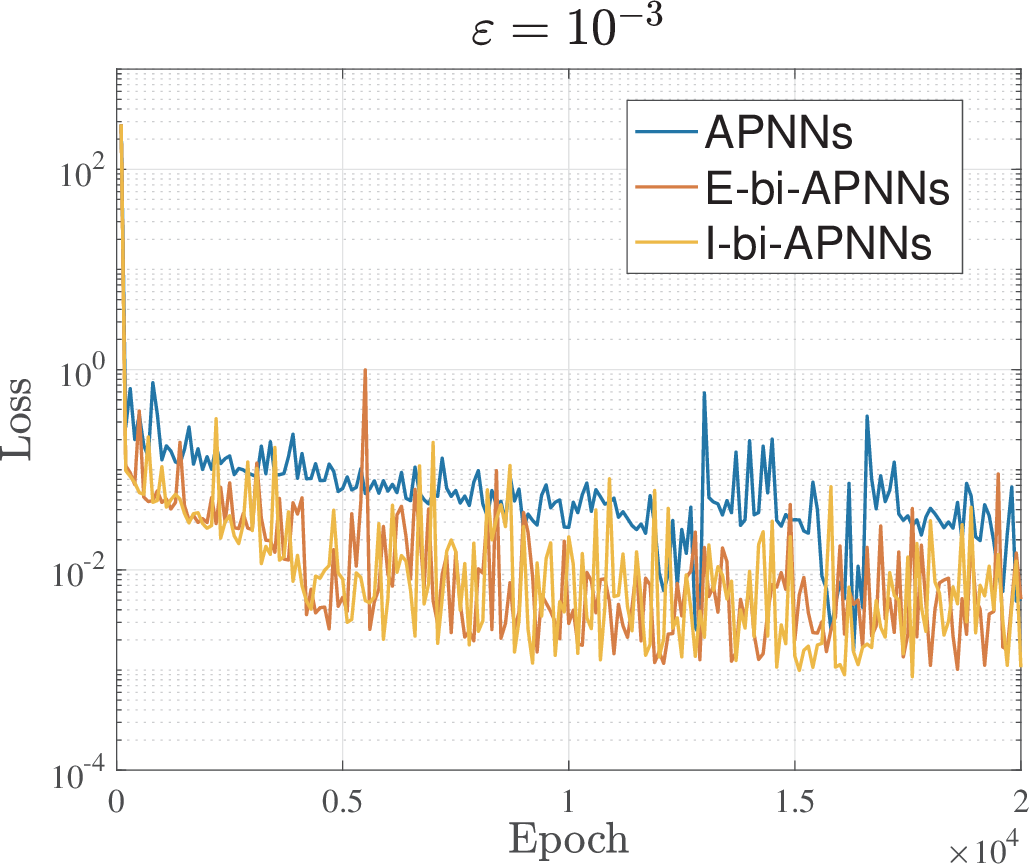}
    \includegraphics[width=0.49\linewidth]{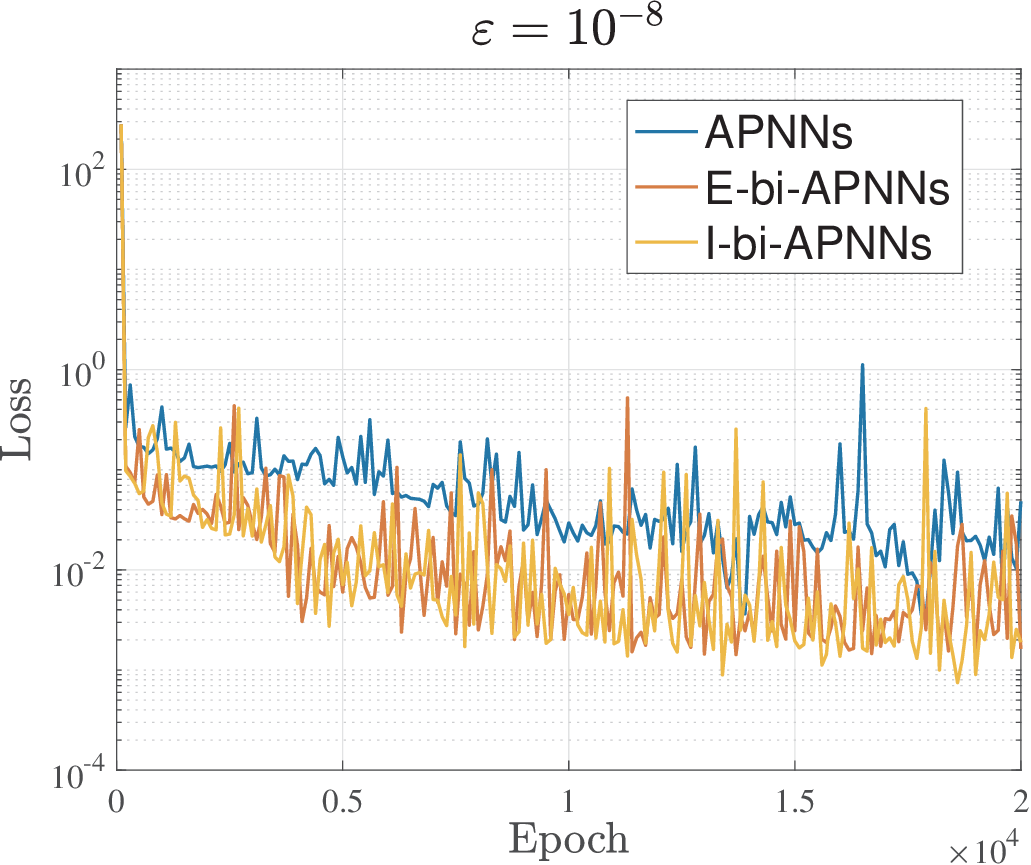}
    \caption{Problem II with $\e=10^{-3}$ and $\e = 10^{-8}$. Comparison of training losses between Bi-APNNs and APNNs.}
    \label{fig:test2BI_AP_training}
\end{figure}

The BI-APNNs framework, particularly in the small-$\varepsilon$ regime, significantly enhances the convergence speed. This is demonstrated in Figure~\ref{fig:BI_AP_relative_test2}, which compares the convergence behavior of the relative $\ell^2$ error for both the BI-APNNs and standard APNNs under identical network settings.The plots clearly show that the losses for the BI-APNNs converges substantially faster than that of the APNNs. This accelerated convergence in the fluid-dynamic limit is a key practical benefit of the proposed bi-fidelity approach.

\begin{figure}[H]
    \centering
    \includegraphics[width=0.49\linewidth]{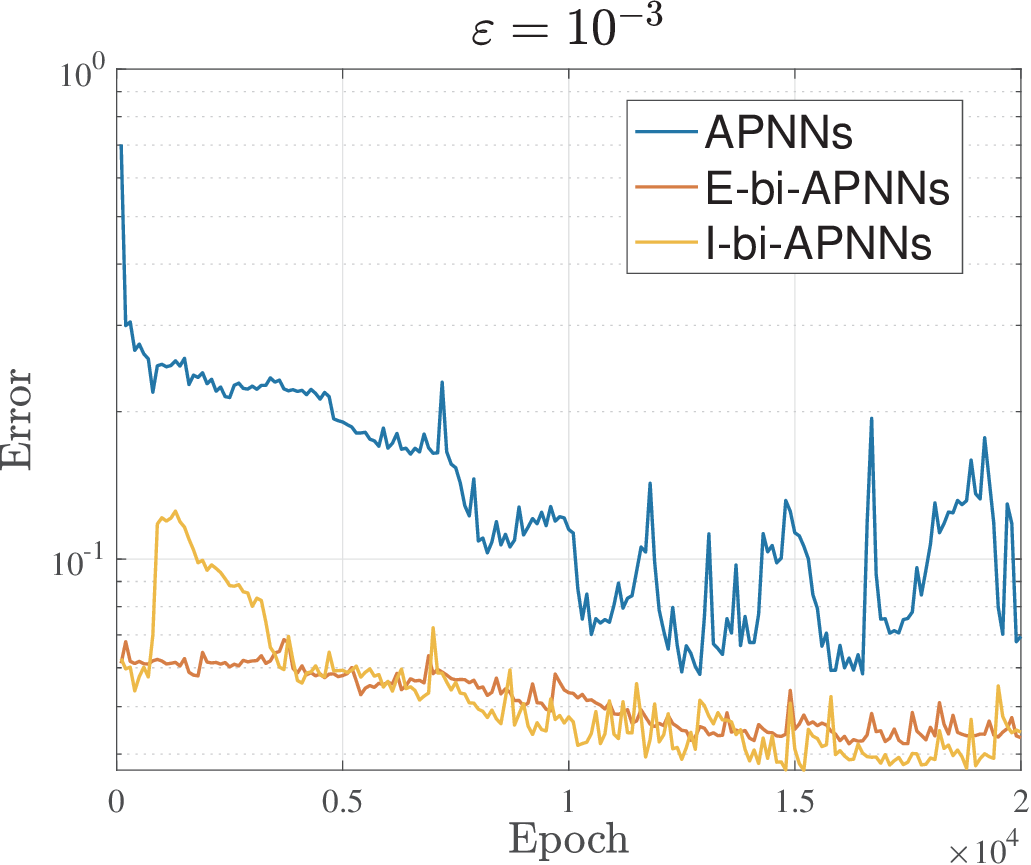}
    \includegraphics[width=0.49\linewidth]{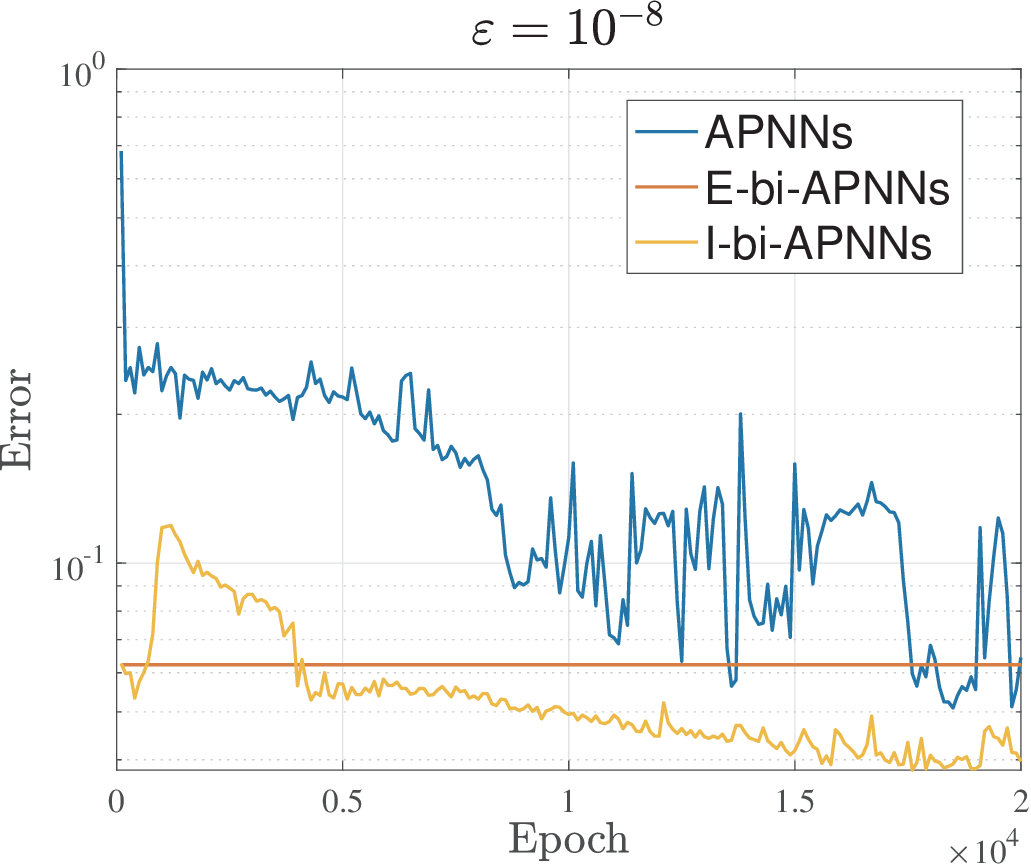}
    \caption{Problem II with $\e=10^{-3}$ and $\e = 10^{-8}$. Comparison of convergence in relative errors of Bi-APNNs and APNNs.}
    \label{fig:BI_AP_relative_test2}
\end{figure}

We further investigate the impact of network architecture on the performance of both the APNNs and BI-APNNs frameworks. The empirical loss for the pretrained diffusion system can be found in \eqref{Loss-diffusion_empirical2}. Based on our findings in Problem I, where the number of neurons was observed to have a marginal impact on performance, we focus this study on varying the number of hidden layers for the network approximating the density, $\rho$. The results, summarized in Tables~\ref{tab:test2_all_methods_layer_1e-3} and \ref{tab:test2_all_methods_layer_1e-8}, reveal the contrast in architectural requirements. The standard APNNs require a relatively deep network, typically with 3 to 4 hidden layers, to achieve a relative error on the order of $\mathcal{O}(10^{-2})$. In sharp contrast, for small Knudsen numbers, the BI-APNNs can achieve a comparable or even higher accuracy without any hidden layers at all for the correction term $\rho_{\text{corr}}$. This remarkable result underscores the efficiency of our bi-fidelity formulation, demonstrating its ability to significantly reduce computational resource requirements while maintaining high accuracy.

\begin{table}[!htbp]
    \centering
    \scalebox{0.89}{
    \begin{tabular}{l|c|c|c|c|c}
        \toprule
        \multirow{2}{*}{Method}  & \multicolumn{5}{c}{Layer} \\
        \cmidrule(lr){2-6}
        & 0 & 1 & 2 & 3 & 4   \\
        \midrule
        \multirow{1}{*}{APNNs} & $2.94 \times 10^{-1}$ & $1.35 \times 10^{-1}$ & $6.60 \times 10^{-2}$ & $3.85 \times 10^{-2}$ & $4.31 \times 10^{-2}$ \\
        \midrule
        \multirow{1}{*}{Bi-APNNs-Explicit} & $5.20 \times 10^{-2}$ & $4.13 \times 10^{-2}$ & $5.66 \times 10^{-2}$ & $5.73 \times 10^{-2}$ & $4.10 \times 10^{-2}$ \\
        \midrule
        \multirow{1}{*}{Bi-APNNs-Implicit}  & $5.02 \times 10^{-2}$ & $4.38 \times 10^{-2}$ & $3.99 \times 10^{-2}$ & $4.19 \times 10^{-2}$ & $3.60 \times 10^{-2}$ \\
        \bottomrule
    \end{tabular}
    } 
    \caption{Problems II. Comparison of Relative Error for different methods across various network layers, with $\varepsilon=10^{-3}$.}
    \label{tab:test2_all_methods_layer_1e-3}
\end{table}

\begin{table}[!htbp]
    \centering
    \scalebox{0.89}{
    \begin{tabular}{l|c|c|c|c|c}
        \toprule
        \multirow{2}{*}{Method} &  \multicolumn{5}{c}{Layer} \\
        \cmidrule(lr){2-6}
        & 0 & 1 & 2 & 3 & 4   \\
        \midrule
        \multirow{1}{*}{APNNs} &  $2.63 \times 10^{-1}$ & $8.20 \times 10^{-2}$ & $4.11 \times 10^{-2}$ & $4.26 \times 10^{-2}$ & $4.50 \times 10^{-2}$ \\
        \midrule
        \multirow{1}{*}{Bi-APNNs-Explicit}  & $6.22 \times 10^{-2}$ & $6.22 \times 10^{-2}$ & $6.22 \times 10^{-2}$ & $6.22 \times 10^{-2}$ & $6.22 \times 10^{-2}$ \\
        \midrule
        \multirow{1}{*}{Bi-APNNs-Implicit}  & $5.07 \times 10^{-2}$ & $4.68 \times 10^{-2}$ & $3.76 \times 10^{-2}$ & $3.85 \times 10^{-2}$ & $3.61 \times 10^{-2}$ \\
        \bottomrule
    \end{tabular}
    } 
    \caption{Problems II. Comparison of relative Error for different methods across various network layers, with $\varepsilon=10^{-8}$.}
    \label{tab:test2_all_methods_layer_1e-8}
\end{table}

\subsubsection{Inverse problem}
Next, we further examine the performance for a inverse problem inferring the scattering coefficient from available measurement data in BP system using the Bi-APNNs and APNNs formulation. Similarly, we consider using both methods when partial observation data is available. We obtain synthetic dataset for $\rho(t_d^i,x_d^i)$ and $\phi(t_d^i,x_d^i)$, where $\rho(t_d^i,x_d^i)$ is obtained in the way similar to Problem I. Besides the APNNs loss \eqref{empirical_Loss-APNNs} and Bi-APNNs loss \eqref{empirical_Loss-biAPNN}, we need an extra term $\mathcal{L}_{\text{data}}^{\phi}(\theta)$, 

\begin{equation}
    \mathcal{L}_{\text{data}}^{\phi}(\theta) = \frac{w_d^{\phi}}{N_{d_2}} \sum_{i=1}^{N_{d_2}} \left| \phi^{\mathrm{NN}}(t_{d}^i, x_{d}^i; \theta) -\phi(t_{d}^i,x_{d}^i) \right|^2,
    \label{Inverse_phi}
\end{equation}
where $\omega_d^\phi = 1$, and $N_{d_2}$ is the number of data points for $\phi$. We train the network model on measurements with $N_{d_1}, N_{d_2} = 100$ samples randomly selected in the domain $(t,x) \in [0,0.1] \times [0,1]$ to estimate the target parameter $\sigma=2$ under different initial guesses $\sigma_0 = 0.5, 1.0, 1.5, 1.7. 1.9$ under both $\varepsilon = 10^{-3}$ and $\varepsilon = 10^{-8}$. 

The enhanced performance of the BI-APNNs for the inverse problem is demonstrated both visually and quantitatively in Figure~\ref{fig:test2_inverse} and Tables~\ref{tab:test2_ep1e-3} and \ref{tab:test2_ep1e-8}, respectively. The figure illustrates a more accurate and stable convergence for the BI-APNNs, showing the estimated mean of the parameter $\hat{\sigma}$ with a tighter confidence interval compared to the standard APNNs. This qualitative observation is confirmed by the quantitative results in the tables, which show that our two BI-APNNs formulations achieve a better inverse parameter estimation. 

\begin{figure}[H]
    \centering
    \includegraphics[width=0.49\linewidth]{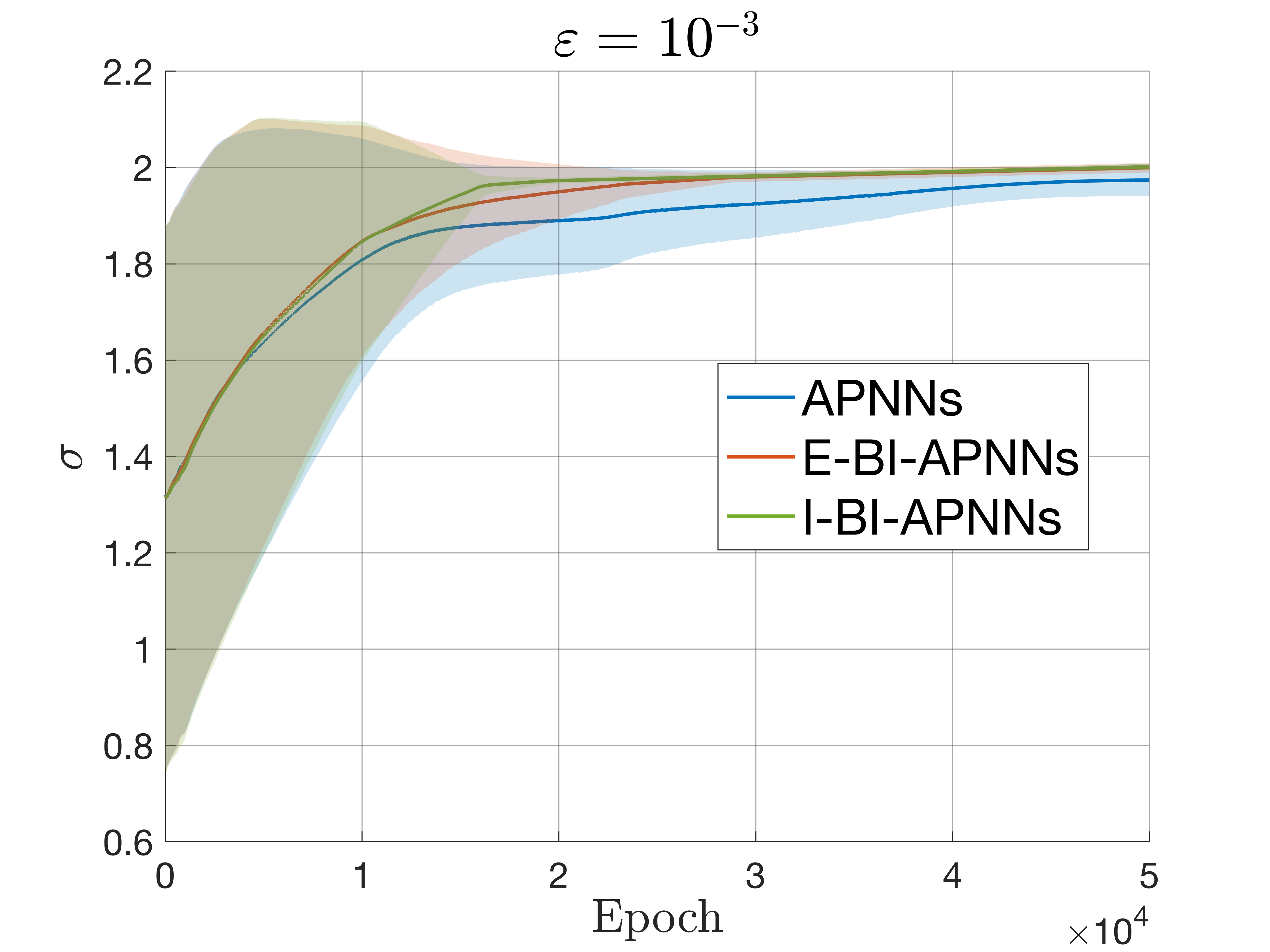}
    \includegraphics[width=0.49\linewidth]{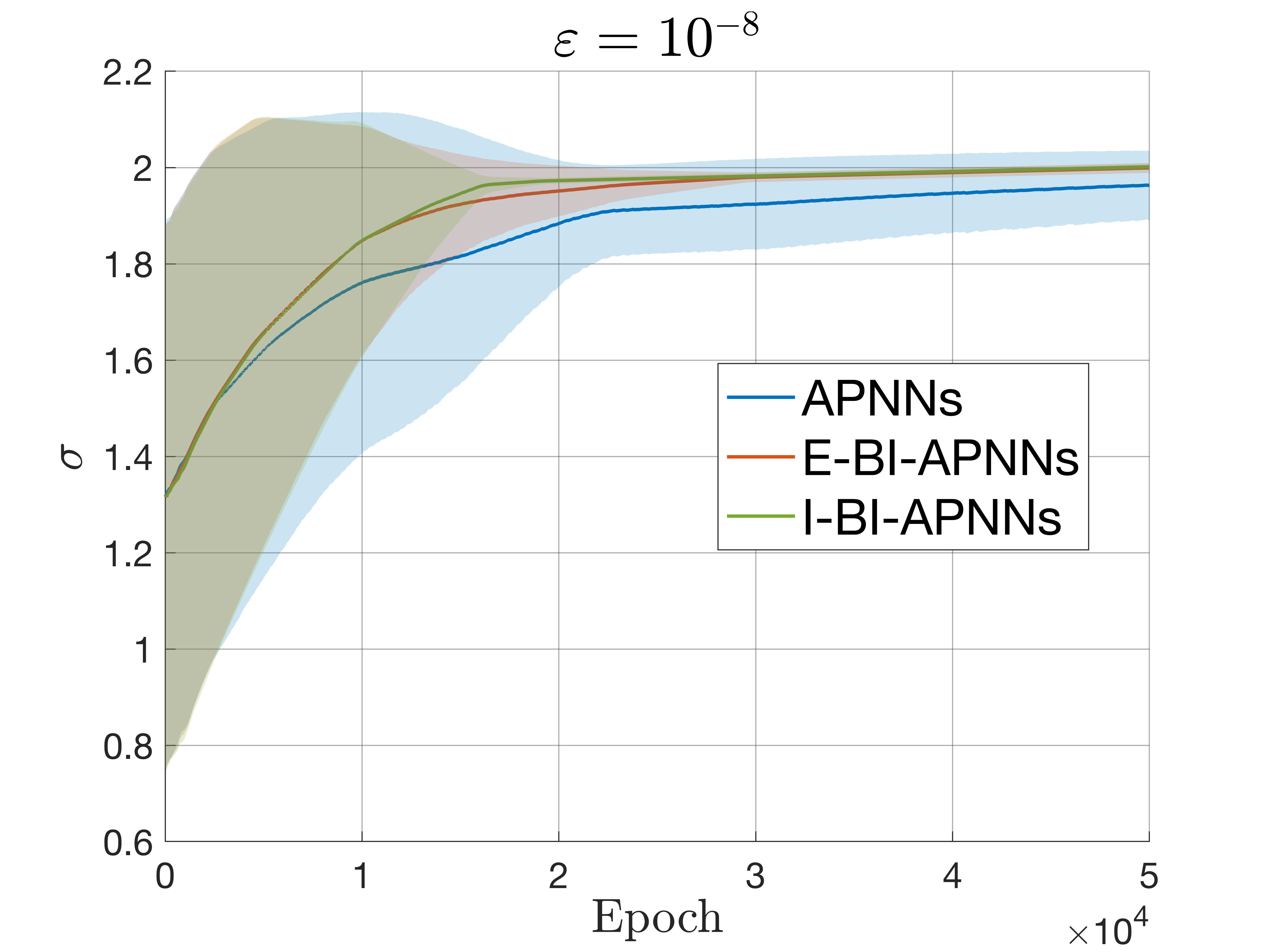}
    \caption{Inverse problem for Problem II. Comparison between three methods under different initial guesses with $\e = 10^{-3}$ and $\e = 10^{-8}$.}
    \label{fig:test2_inverse}
\end{figure}

\begin{table}[H]
    \centering
    \vspace{-15pt}
            \renewcommand{\multirowsetup}{\centering}
            \setlength{\tabcolsep}{5.5pt}
            \scalebox{1}{
                \begin{tabular}{l|c|c|c|c|c|c}
                    \toprule
                    \multirow{2}{*}{Method} & \multicolumn{6}{c}{Initial Guess ($\sigma_0$)} \\
                    \cmidrule(lr){2-7}
                    & 0.5 & 1.0 & 1.5 & 1.7 & 1.9 & Average \\
                    \midrule
                    APNNs & 1.966 & 1.964 & 1.927 & 2.006 & 2.009 & 1.974 \\
                    Bi-APNNs-Explicit & 1.983 & 1.998 & 2.004 & 2.006 & 2.008 & \textbf{2.000} \\
                    Bi-APNNs-Implicit & 1.992 & 1.999 & 2.004 & 2.006 & 2.007 & 2.002 \\
                    \bottomrule
                \end{tabular}
        }
            \caption{Problem II inverse problem. Comparison between three methods under different initial guesses with $\e = 10^{-3}$.}
    \label{tab:test2_ep1e-3}
\end{table}

\begin{table}[H]
    \centering
    \vspace{-15pt}
            \renewcommand{\multirowsetup}{\centering}
            \setlength{\tabcolsep}{5.5pt}
            \scalebox{1}{
                \begin{tabular}{l|c|c|c|c|c|c}
                    \toprule
                    \multirow{2}{*}{Method} & \multicolumn{6}{c}{Initial Guess ($\sigma_0$)} \\
                    \cmidrule(lr){2-7}
                    & 0.5 & 1.0 & 1.5 & 1.7 & 1.9 & Average \\
                    \midrule
                    APNNs & 1.838 & 1.962 & 2.002 & 2.005 & 2.006 & 1.963 \\
                    Bi-APNNs-Explicit & 1.982 & 1.997 & 2.004 & 2.006 & 2.007 & \textbf{1.999} \\
                    Bi-APNNs-Implicit & 1.992 & 1.998 & 2.004 & 2.006 & 2.007 & 2.002 \\
                    \bottomrule
                \end{tabular}
        }
            \caption{Problem II inverse problem. Comparison between three methods under different initial guesses with $\e = 10^{-8}$.}
    \label{tab:test2_ep1e-8}
\end{table}

\section{Conclusions and future work} 
In this paper, we have developed and analyzed a Bi-fidelity Asymptotic-Preserving Neural Network (BI-APNNs) framework for solving forward and inverse problems for the semiconductor Boltzmann equation. This work builds upon the standard APNNs methodology, specifically addressing a key practical bottleneck: the slow convergence. The slower convergence of the standard APNNs can be observed in both the training loss and the relative $\ell_2$ error curves, particularly in the computationally challenging fluid-dynamic regime. In contrast, the BI-APNNs shows faster overall convergence of the macroscopic density, $\rho$, in the computationally challenging fluid-dynamic regime. The core of our approach is a novel bi-fidelity decomposition of the density. In the forward problem, it allows a pre-trained low-cost network to capture the dominant physical behavior, while a much smaller network learns the remaining high-fidelity correction. Through a series of numerical experiments, we have demonstrated the key advantages of this bi-fidelity formulation. The results confirm that, particularly in the small-$\varepsilon$ regime, the BI-APNNs not only significantly accelerate training convergence but also improve the accuracy of the forward problem solution. Most importantly, we have shown that this enhanced accuracy translates directly into superior performance for the associated inverse problems when compared to the standard APNNs. This work underscores the potential of bi-fidelity methods to enhance physics-informed neural networks, making them a more efficient and reliable tool for complex multiscale kinetic problems.

Several promising directions for future research emerge from this work. On the theoretical front, our convergence analysis could be further refined by incorporating tools from Barron spaces \cite{Gu-Ng} and developing posterior error estimates to more precisely quantify the network's approximation capabilities. From a practical standpoint, extending the BI-APNNs framework to higher-dimensional problems is a key next step. Furthermore, the computational efficiency demonstrated by the BI-APNNs makes them an particularly attractive candidate for uncertainty quantification (UQ) problems, where the need for numerous forward solves often creates a computational bottleneck.

\section*{Acknowledgement}
\label{sec:ack}
L.~Liu acknowledges the support by National Key R\&D Program of China (2021YFA1001200), Ministry of Science and Technology in China, General Research Fund (14301423 \& 14307125) funded by Research Grants Council of Hong Kong. X. Zhu was supported by the Simons Foundation (MPS-TSM-00007740). 

\section{Appendix}
\label{sec:Appendix}
\subsection{Velocity discretizations}
For completeness, we briefly mention the velocity discretization similar to what has been studied in \cite{JP00}. Set $f(t,x,v) = \psi(t,x,v) M(v)$, where $M(v) = \frac{1}{\sqrt{\pi}} e^{-v^2}$, with 
\begin{equation}\label{Psi} \psi(t,x,v) = \sum_{k=0}^N \psi_k(t,x) \tilde H_k(v), \end{equation}
being the Hermite expansion. 
For notation simplicity, we omit the $t$ and $x$ dependence of functions below. 
Here $\tilde H_k$ are the renormalized Hermite polynomials
defined as $\tilde H_{-1}=0$, $\tilde H_0 = 1/\pi^{1/4}$ and 
$$ \tilde H_{j+1} = v \sqrt{\frac{2}{j+1}}\tilde H_j - \sqrt{\frac{j}{j+1}}\tilde H_{j-1} \quad \text{for } j\geq 0, $$
satisfying $\partial_v \tilde H_j = \sqrt{2 j}\, \tilde H_{j-1}$. 
The inverse Hermite expansion is given by 
\begin{equation}\label{I-Psi} \psi_k = \sum_{j=0}^N \psi(v_j)\, \tilde H_k(v_j)\, w_j, 
\end{equation}
where $(v_j, w_j)$ are the points and corresponding weights of the Gauss-Hermite quadrature rule. 
Thus the collision operator $Q$ in \eqref{Boltz-eqn} can be computed by
$$ Q(f)(v) = M(v) \sum_{j=0}^N \sigma(v, v_j)\, \psi(v_j)\, w_j - \lambda(v) f(v), $$
with $\lambda(v) = \sum_{j=0}^N \sigma(v, v_j)\, w_j$. From \eqref{Psi} and \eqref{I-Psi}, one computes the derivative in $v$ by
\begin{equation*}
\begin{aligned}
\partial_v \psi & = \sum_{k=0}^N \psi_k\, \partial_v \tilde H_k(v) 
= \sum_{k=0}^N \psi_k \sqrt{2k}\, \tilde H_{k-1}(v) \\
& = \sum_{k=0}^N \sum_{j=0}^N \psi(v_j) \, \tilde H_k(v_j) w_j \sqrt{2k}\, \tilde H_{k-1}(v) \\
& = \sum_{j=0}^N \psi(v_j)\, C_j(v),
\end{aligned}
\end{equation*}
where $C_j(v) = \sum_{k=0}^N \sqrt{2k}\, \tilde H_k(v_j) \tilde H_{k-1}(v) w_j$, 
and can be precomputed before the time iteration.  

Since $f = \psi M$, instead of solving the model equation \eqref{Boltz-eqn} for $f$, we transform it into solving an equation for the function $\psi$: 
$$ \e \partial_t \psi + v \partial_x\psi  + \partial_x\phi \left(\partial_v \psi - 2 v \psi \right) = \frac{1}{\e}\, \tilde Q(\psi), $$
where $$\tilde Q(\psi)(v) = \sum_{j=0}^N \sigma(v, v_j)\, \psi(v_j)\, w_j - \lambda(v)\psi(v). $$

\subsection{Loss functions for PINNs}
The empirical risk for PINNs is as follows:
\small{\begin{equation}
\label{empirical_Loss-PINN}
\begin{aligned}
\mathcal{R}_{\mathrm{PINNs}}^{\e} & = 
 \frac{1}{N_1} \sum_{i=1}^{N_1} \Big| \e \partial_t f_\theta^{\mathrm{NN}}(t_i,x_i,v_i) +  \bv \partial_x f_\theta^{\mathrm{NN}}(t_i,x_i,v_i)
+ \partial_x \phi(t_i,x_i)\, \partial_v f_\theta^{\mathrm{NN}}(t_i,x_i,v_i) - \frac{1}{\e}\mathcal{Q}(f_\theta^{\mathrm{NN}}(t_i,x_i,v_i))|^2 \,\\[6pt]
& + \frac{\lambda_1}{N_2} \sum_{i=1}^{N_2} \left|\mathcal{B} (f_{\theta}^{\mathrm{NN}}(t_i,x_i,v_i)) - f_{\text{BC}}(t_i,x_i,v_i)\right|^2  + \frac{\lambda_2}{N_3} \sum_{i=1}^{N_3} \left | 
\mathcal{I}(f_\theta^{\mathrm{NN}}(t_i,x_i,v_i))- f_{\text{IC}}(t_i,x_i,v_i)\right|^2,
\end{aligned}
\end{equation}
}
where $N_1$, $N_2$, $N_3$ are the number of sample points of $\mathcal{T} \times \mathcal{D} \times \Omega$, $\mathcal{T} \times \partial \mathcal{D} \times \Omega$ and $\mathcal{D} \times \Omega$. $\lambda_1$ and $\lambda_2$ are the corresponding weights. 

\subsection{Loss functions for APNNs}
The empirical risk for APNNs is as follows:
\eqref{Macro}--\eqref{Micro} as the APNNs loss function: 
\small{\begin{equation}
\label{empirical_Loss-APNNs}
\begin{aligned}
\mathcal{R}_{\mathrm{APNNs}}^{\e} = & \frac{1}{N_1} \sum_{i=1}^{N_1}\left|\partial_t \rho_\theta^{\mathrm{NN}}(t_i,x_i)+\nabla_{\bx} \cdot\left\langle\bv g_\theta^{\mathrm{NN}}(t_i,x_i)\right\rangle + \red{ \nabla_{\bx}\phi(t_i,x_i) \cdot  \left\langle \nabla_{\bv} g_\theta^{\mathrm{NN}}(t_i,x_i)\right\rangle } \right|^2 \,\\[6pt]
& + \frac{1}{N_2} \sum_{i=1}^{N_2} \Big| \e^2 \partial_t g_\theta^{\mathrm{NN}}(t_i,x_i,v_i)
 + \e(I-\Pi) \left(\bv \cdot \nabla_{\bx} g_\theta^{\mathrm{NN}}(t_i,x_i,v_i) + \red{\nabla_{\bx}\phi(t_i,x_i) \cdot \nabla_{\bv} g_\theta^{\mathrm{NN}}(t_i,x_i,v_i) }\right) \\[6pt]
&   - 2\bv \cdot \nabla_{\bx}\phi(t_i,x_i)\, \rho_\theta^{\mathrm{NN}}(t_i,x_i)\, M(v)  
 + \bv \cdot \nabla_{\bx} \rho_\theta^{\mathrm{NN}}(t_i,x_i) M(v)  - \mathcal{Q}( g_\theta^{\mathrm{NN}}(t_i,x_i,v_i) ) \Big|^2 \,\\[6pt]
& + \frac{\lambda_1}{N_3} \left|\mathcal{B}  \left(\rho_\theta^{\mathrm{NN}}(t_i,x_i) M(v) + \e g_\theta^{\mathrm{NN}}(t_i,x_i,v_i)\right) - f_{\text{BC}}(t_i,x_i,v_i)\right|^2 \,\\[6pt]
 &+\frac{\lambda_2}{N_4} \left|\mathcal{I}\left(\rho_\theta^{\mathrm{NN}}(t_i,x_i) M(v) +\e g_\theta^{\mathrm{NN}}(t_i,x_i,v_i)\right)- f_{\text{IC}}(t_i,x_i,v_i) \right|^2, 
\end{aligned}
\end{equation}
}
where $N_1$,$N_2$, $N_3$, $N_4$ are the number of sample points of $\mathcal{T} \times \mathcal{D}$, $\mathcal{T} \times \mathcal{D} \times \Omega$, $\mathcal{T} \times \partial \mathcal{D} \times \Omega$ and $\mathcal{D} \times \Omega$. $\lambda_1$ and $\lambda_2$ are the corresponding weights.

Regarding the incoming boundary condition given as
$$ f(t,x,v)\Big|_{x_L} = F_L(v), \qquad
f(t,x,-v)\Big|_{x_R} = F_R(v), \quad \text{ for } v>0, 
$$
we look at the third term of \eqref{Loss-APNN}, with the discretized form of the integral shown by
\begin{equation*}
\begin{aligned}
&\sum_{i}\sum_{j \text{ for }v_j>0} 
\left| \rho_\theta^{\mathrm{NN}}(t_i, x_L) M(v_j) + 
\e g_\theta^{\mathrm{NN}}(t_i, x_L, v_j) - F_L(v_j) \right|^2 w_j \, \Delta t \\
& + \sum_{i}\sum_{j \text{ for }v_j<0}
\left| \rho_\theta^{\mathrm{NN}}(t_i, x_R) M(v_j) + 
\e g_\theta^{\mathrm{NN}}(t_i, x_R, v_j) - F_R(v_j) \right|^2 w_j\, \Delta t. 
\end{aligned}
\end{equation*}

\subsection{Loss functions for diffusion system} 
The empirical risk for the diffusion system \eqref{Diffusion} is as follows:
\begin{equation}
\label{Loss-diffusion_empirical}
\begin{aligned}
\mathcal{R}_{\text{diffusion}_1} 
&= \frac{1}{N_1} \sum_{i=1}^{N_1} \left| \partial_t \rho_{\textit{diff},\theta}^{\mathrm{NN}}(t_i, x_i) - \nabla_x \cdot \left( T \nabla_x \rho_{\textit{diff},\theta}^{\mathrm{NN}}(t_i, x_i) - 2 \rho_{\textit{diff},\theta}^{\mathrm{NN}}(t_i, x_i) \nabla_x \phi(t_i, x_i) \right) \right|^2 \\[6pt]
&+ \frac{\lambda_1^{\text{diff}}}{N_2} \sum_{i=1}^{N_2} \left| \mathcal{B} \left( \rho_{\textit{diff},\theta}^{\mathrm{NN}}(t_i, x_i)M(v) \right) - f_{\text{BC}}(t_i, x_i,v_i) \right|^2 + \frac{\lambda_2^{\text{diff}}}{N_3} \sum_{i=1}^{N_3} \left| \mathcal{I}\left( \rho_{\textit{diff},\theta}^{\mathrm{NN}}(x_i)M(v) \right) - f_{\text{IC}}(x_i,v_i) \right|^2,
\end{aligned}
\end{equation}
where $N_1$, $N_2$, $N_3$ are the number of sample points of $\mathcal{T} \times \mathcal{D} \times \Omega$, $\mathcal{T} \times \partial \mathcal{D} \times \Omega$ and $\mathcal{D} \times \Omega$. $\lambda_1$ and $\lambda_2$ are the corresponding weights. 

Another optional diffusion loss comes from \eqref{limit-diff}. The corresponding empirical risk loss is as follows:
\begin{equation}
\label{Loss-diffusion_empirical2}
\begin{aligned}
\mathcal{R}_{\text{diffusion}_2} 
&= \frac{1}{N_1} \sum_{i=1}^{N_1} \left| \partial_t \rho_{\textit{diff},\theta}^{\mathrm{NN}}(t_i, x_i) + \nabla_x \cdot \left\langle v g_{\theta}^{\mathrm{NN}}(t_i, x_i, v) \right\rangle + \nabla_x \phi(t_i, x_i) \cdot \left\langle \nabla_v g_{\theta}^{\mathrm{NN}}(t_i, x_i, v) \right\rangle \right|^2 \\[6pt]
&+ \frac{1}{N_2} \sum_{i=1}^{N_2} \left| \bv \cdot \nabla_x \rho_{\textit{diff},\theta}^{\mathrm{NN}}(t_i, x_i) \, M(v) - 2 \bv \cdot \nabla_x \phi(t_i, x_i) \, \rho_{\textit{diff},\theta}^{\mathrm{NN}}(t_i, x_i) \, M(v) - \mathcal{Q}\left( g_{\theta}^{\mathrm{NN}}(t_i, x_i, v_i) \right) \right|^2 \\[6pt]
&+ \frac{\lambda_1^{\text{diff}}}{N_3} \sum_{i=1}^{N_3} \left| \mathcal{B} \left( \rho_{\textit{diff},\theta}^{\mathrm{NN}}(t_i, x_i) M(v_i) \right) - f_{\text{BC}}(t_i, x_i, v_i) \right|^2 + \frac{\lambda_2^{\text{diff}}}{N_4} \sum_{i=1}^{N_4} \left| \mathcal{I}\left( \rho_{\textit{diff},\theta}^{\mathrm{NN}}(x_i) M(v) \right) - f_{\text{IC}}(x_i, v_i) \right|^2,
\end{aligned}
\end{equation}
where $N_1$, $N_2$ are the number of sample points of $\mathcal{T} \times \mathcal{D} \times \Omega$, $N_3$  and $N_4$ are the number of sample points of  $\mathcal{T} \times \partial \mathcal{D} \times \Omega$ and $\mathcal{D} \times \Omega$. $\lambda_1$ and $\lambda_2$ are the corresponding weights. 


\subsection{Loss functions for BI-APNNs} 
The empirical risk for BI-APNNs is as follows:
\begin{equation}
\label{empirical_Loss-biAPNN}
\begin{aligned}
\mathcal{R}_{\mathrm{BI-APNNs}}^{\e} = & \frac{1}{N_1} \sum_{i=1}^{N_1}\left|\partial_t \rho_{\textit{bi},\theta}^{\mathrm{NN}}(t_i,x_i)+\nabla_{\bx} \cdot\left\langle\bv g_\theta^{\mathrm{NN}}(t_i,x_i)\right\rangle + \red{ \nabla_{\bx}\phi(t_i,x_i) \cdot  \left\langle \nabla_{\bv} g_\theta^{\mathrm{NN}}(t_i,x_i)\right\rangle } \right|^2 \,\\[6pt]
& + \frac{1}{N_2} \sum_{i=1}^{N_2} \Big| \e^2 \partial_t g_\theta^{\mathrm{NN}}(t_i,x_i,v_i)
 + \e(I-\Pi) \left(\bv \cdot \nabla_{\bx} g_\theta^{\mathrm{NN}}(t_i,x_i,v_i) + \red{\nabla_{\bx}\phi(t_i,x_i) \cdot \nabla_{\bv} g_\theta^{\mathrm{NN}}(t_i,x_i,v_i) }\right) \\[6pt]
&   - 2\bv \cdot \nabla_{\bx}\phi(t_i,x_i)\, \rho_{\textit{bi},\theta}^{\mathrm{NN}}(t_i,x_i)\, M(v)  
 + \bv \cdot \nabla_{\bx} \rho_{\textit{bi},\theta}^{\mathrm{NN}}(t_i,x_i) M(v)  - \mathcal{Q}( g_\theta^{\mathrm{NN}}(t_i,x_i,v_i) ) \Big|^2 \,\\[6pt]
& + \frac{\lambda_1}{N_3} \left|\mathcal{B}  \left(\rho_{\textit{bi},\theta}^{\mathrm{NN}}(t_i,x_i) M(v) + \e g_\theta^{\mathrm{NN}}(t_i,x_i,v_i)\right) - f_{\text{BC}}(t_i,x_i,v_i)\right|^2 \,\\[6pt]
 &+\frac{\lambda_2}{N_4} \left|\mathcal{I}\left(\rho_{\textit{bi},\theta}^{\mathrm{NN}}(t_i,x_i) M(v) +\e g_\theta^{\mathrm{NN}}(t_i,x_i,v_i)\right)- f_{\text{IC}}(t_i,x_i,v_i) \right|^2
 \,.
\end{aligned}
\end{equation}
Where $N_1$,$N_2$, $N_3$, $N_4$ are the number of sample points of $\mathcal{T} \times \mathcal{D}$, $\mathcal{T} \times \mathcal{D} \times \Omega$, $\mathcal{T} \times \partial \mathcal{D} \times \Omega$ and $\mathcal{D} \times \Omega$. $\lambda_1$ and $\lambda_2$ are the corresponding weights.

For the integrals in losses of PINNs, APNNs and Bi-APNNs, the operator $\langle \cdot \rangle$ in  \eqref{bracket} and $\Pi(\cdot)$ in  \eqref{Ans}, we compute them by quadrature rule.

\bibliographystyle{siam}
\bibliography{APNN.bib}

\end{document}